\documentclass[12pt,oneside]{amsart}

\usepackage[section]{placeins} % arXiv: placeins does not support the subsection option
\usepackage[margin=1in]{geometry}
\usepackage{mathabx}
\setlength{\parskip}{0.5\baselineskip}

\usepackage{xcolor}
\usepackage{graphicx}
\usepackage{amssymb}
\usepackage{tikz}
\usepackage{latexsym}
\usepackage{epsfig}
\usepackage{enumitem}
\usepackage{multicol}
\usepackage{wasysym}
\usepackage{amsmath}
\usepackage{amsthm}
\usepackage{amscd}
\usepackage{ulem}
\usepackage{soul}
\usepackage{multirow}
\usepackage{rotating}
\usepackage{pdflscape}
\usepackage{float}
\usepackage{dsfont}
\usepackage{comment}

% If you are not using .tif images, delete the next line.
% On some systems (including arXiv), shell-escape is disabled and this can fail.
% Safer default: comment out the convert-based rule unless you know shell-escape is enabled.
% \DeclareGraphicsRule{.tif}{png}{.png}{`convert #1 `dirname #1`/`basename #1 .tif`.png}

\usepackage[colorlinks,citecolor=red,pagebackref,hypertexnames=false]{hyperref}
\hypersetup{
  colorlinks = true,
  citecolor=red,
  linkcolor  = red
}

\setlist{itemsep=.06125in}

\restylefloat{table}
\numberwithin{equation}{section}

\theoremstyle{plain}
\newtheorem{theorem}{Theorem}[section]
\newtheorem{lemma}[theorem]{Lemma}

\newtheorem{proposition}[theorem]{Proposition}

\theoremstyle{definition}

\newtheorem{example}[theorem]{Example}

\theoremstyle{remark}
\newtheorem{remark}[theorem]{Remark}

\newcommand{\eps}{\varepsilon}

\date{\today}

\author{A. Iosevich, E. Palsson, and A. Yavicoli}
\address{Department of Mathematics, University of Rochester, Rochester, NY, USA}
\email{iosevich@gmail.com}
\address{Department of Mathematics, Virginia Tech, Blacksburg, VA, USA}
\email{palsson@vt.edu}
\address{Department of Mathematics, University of British Columbia, Vancouver, BC, Canada}
\email{yavicoli@math.ubc.ca}

\thanks{A. I. was supported in part by the National Science Foundation under NSF DMS - 2154232.}
\thanks{A. Y. was supported in part by the Natural Sciences and Engineering Research Council of Canada, NSERC (GR030571 and GR030540).}

\title{Discretization, sampling, and the Fourier ratio}

\begin{document}

\maketitle

\begin{abstract}
We derive fundamental sampling bounds for smooth signals in continuous settings without sparsity assumptions. By introducing the Fourier ratio as a measure of spectral compressibility induced by smoothness, we obtain explicit, deterministic bounds linking signal regularity to recoverability from incomplete random samples. For functions in $C^{2}([0,1]^{2})$ sampled on an $N$ by $N$ grid, we show that a random subset of spatial samples of size
$$
C\frac{r_{N}^{2}}{\eps^{2}}\log(r_{N}/\eps)^{2}\log(N^{2})
$$
suffices, with high probability, to recover the entire discretized signal via $\ell^{1}$ minimization with relative $L^{2}$ error $O(\eps)$. We develop a parallel theory for bandlimited functions on the unit sphere, obtaining analogous recovery guarantees with sample complexity scaling polylogarithmically in the bandwidth. Our results establish smoothness as a deterministic prior that enforces compressibility in the Fourier domain, bridging continuous harmonic analysis with discrete compressed sensing in a unified information-theoretic framework.
\end{abstract}

\tableofcontents

\section{Introduction}

The purpose of this paper is to show that sampling problems in a variety of continuous settings can be effectively studied using discretization, followed by a reduction to discrete compressed sensing techniques where the Fourier ratio, developed in \cite{A2025}, is the key controlling parameter. This viewpoint fits into a broader compressed sensing framework based on stable recovery from incomplete measurements via $\ell^1$ minimization and related convex programs, as initiated in \cite{CRT06} and developed further in, for example, \cite{FR13,Rau10,RV08}.

A key point throughout the paper is that no sparsity assumption is imposed on the underlying signal in physical space. Instead, a quantitative form of compressibility emerges deterministically from regularity through bounds on the Fourier ratio, in the spirit of uncertainty principle and structural viewpoints in signal recovery \cite{DS89,IM24}.

The Fourier ratio, defined as $FR(f)=\|\widehat{f}\|_1/\|\widehat{f}\|_2$, serves as a measure of numerical sparsity or compressibility in the Fourier domain. A small Fourier ratio indicates that the Fourier coefficients are concentrated on a relatively small set, enabling stable recovery from incomplete samples. Crucially, we show that smoothness of the underlying continuous function directly controls this ratio, providing a deterministic bridge between regularity and compressibility.

We shall consider two scenarios, one on the unit cube in ${\mathbb R}^2$, and the other on the unit sphere in ${\mathbb R}^3$. As the reader shall see, both are easily generalizable to a variety of geometric settings.

Throughout the paper, statements of the form ``with high probability'' refer to the underlying sampling randomness and mean that the stated event holds with probability at least $1-D^{-c}$ for some absolute constant $c>0$, where $D$ denotes the relevant ambient dimension, namely $D=N^d$ in the discrete torus setting and $D=(L+1)^2$ in the spherical harmonic setting. The precise value of $c$ is not important for our purposes and can be tracked through the standard random sampling arguments in the references.

From an information-theoretic perspective, the Fourier ratio serves as a quantitative measure of the effective dimensionality of a signal in the Fourier domain. Smoothness enforces spectral decay, which in turn bounds the Fourier ratio and thereby limits the number of measurements required for stable recovery. This provides a deterministic alternative to sparsity-based compressibility, linking classical harmonic analysis with modern sampling theory. In this work, we show that smoothness alone, quantified by membership in $C^{2}$, implies such a bound, yielding explicit sample complexity guarantees for recovery from random samples without imposing any sparsity assumptions.

Much of the classical literature on sampling and recovery is formulated in terms of sparsity or approximate sparsity of coefficient sequences. In the present paper, however, we organize the analysis around the Fourier ratio as a quantitative measure of effective dimension. In companion work \cite{BIN2026}, we show through explicit examples, obstruction results, and complexity bounds that the Fourier ratio can often serve as a more accurate proxy for intrinsic signal complexity than approximate sparsity in time--frequency and imputation problems. The goal here is more limited and complementary: rather than comparing sparsity--based and Fourier--ratio--based models, we focus on explaining how bounds on the Fourier ratio arise naturally from discretization, smoothness, and geometric regularity assumptions, independent of sparsity considerations.

\subsection{A compressed sensing narrative driven by smoothness}

The guiding message of this paper can be summarized as follows. In many sampling problems the unknown object is not sparse in physical space, but rather arises from a smooth continuous model. After discretization, smoothness forces the discretized data to be compressible in a spectral basis, and the degree of compressibility is quantified by the Fourier ratio. Once this deterministic compressibility estimate is available, one can invoke standard $\ell^1$ recovery theorems to obtain stable reconstruction from incomplete measurements with explicit error bounds.

In the Euclidean setting, the unknown is a discretized field $g$ on ${\mathbb Z}_N^2$ obtained by sampling a function $f\in C^2([0,1]^2)$ on an $N$ by $N$ grid. Theorem \ref{thm:missing-values-unit-square} states that a random subset of point evaluations of $g$ of cardinality on the order of
$$
\frac{r_N^2}{\eps^2}\log(r_N/\eps)^2\log(N^2)
$$
suffices, with high probability, to recover the entire discretized field with relative $L^2$ error $O(\eps)$ by a standard basis pursuit type program. The key point is that $r_N$ is controlled deterministically in terms of $\|f\|_{C^2([0,1]^2)}$, $\|f\|_{L^2([0,1]^2)}$, and $N$, so no sparsity hypothesis is imposed.

\begin{remark}[Normalization of empirical norms]
In the recovery theorems below, the empirical norms on sampled sets are defined without a $|X|^{-1/2}$ normalization. This matches the normalization used in the standard restricted isometry and stability arguments for the corresponding sampling matrices. If one prefers averaged empirical norms, this amounts to a harmless rescaling of the constraint parameter $\eps$ and does not affect the sample complexity bounds except through absolute constants.
\end{remark}

In the spherical setting, the unknown is a bandlimited function $f\in V_L$ on $S^2$, and point evaluations model incomplete measurements distributed on the sphere. Theorem \ref{thm:sphere_missing_values} gives a parallel statement: if $f$ also belongs to $C^2(S^2)$, then an explicit spherical Fourier ratio bound $FR_L(f)\le r_L$ follows from regularity and bandwidth, and a random sample of size on the order of
$$
\frac{r_L^2}{\eps^2}\log(r_L/\eps)^2\log((L+1)^2)
$$
suffices for stable recovery in $L^2(S^2)$ by the same $\ell^1$ minimization paradigm. In both settings, smoothness plays the role that sparsity plays in more classical compressed sensing models, by enforcing spectral concentration in a quantitative way.

From the perspective of imaging sciences and inverse problems, the results of this paper provide a deterministic mechanism by which regularity of an underlying signal enforces quantitative compressibility in a transform domain, leading to stable reconstruction from incomplete spatial measurements. In contrast to classical compressed sensing frameworks, where sparsity is imposed as an explicit modeling assumption, no sparsity hypothesis is assumed here. Instead, sparsity emerges implicitly and in a controlled way from smoothness through explicit bounds on the Fourier ratio.

This viewpoint applies naturally to gridded spatial data arising in imaging and signal processing, as well as to data defined on curved geometries such as the sphere, which appear in applications including geophysical imaging, remote sensing, and directional data analysis. In these settings, the problem of recovering missing or corrupted measurements can be viewed as an inverse problem with incomplete sampling, and the results of this paper provide explicit recovery guarantees via standard $\ell^1$ minimization procedures. The emphasis is not on proposing new reconstruction algorithms, but rather on establishing mathematically rigorous conditions under which existing convex optimization methods provably succeed in recovering the underlying signal from partial data.

\subsection{Relation to frame-based erasure-resilient design}

The problem of recovering a field from incomplete sensor measurements is also studied in the context of frame theory and erasure-resilient sensor network design \cite{CasazzaKutyniok2013,BodmannPaulsen2005}. In that setting, sensors are modeled as elements of a finite frame, and conditions are derived, for example coherence bounds and restricted isometry properties, under which the signal can be recovered despite sensor failures. Our approach differs in several key aspects. While frame-based methods often assume the signal lies in a finite-dimensional space or is sparse in a known basis, we start from a continuous smooth signal and show that its discretization automatically satisfies a Fourier ratio bound, implying compressibility without an a priori sparsity assumption. Moreover, while frame-theoretic designs often focus on deterministic sensor placement to guarantee recovery for all signals in a given space, our results demonstrate that for smooth signals, random sensor placement suffices with high probability, and the required number of sensors scales polylogarithmically with the resolution. The Fourier ratio thus serves as a continuous analog of certain frame bounds, linking harmonic analysis to finite-frame erasure resilience while maintaining a direct connection to the underlying continuous model.

\subsection{Data distribution on the unit square}

Let $f\in C^2({[0,1]}^2)$. Consider an evenly spaced $N$ by $N$ grid on ${[0,1]}^2$, restrict $f$ to the vertices of this grid, and view the resulting function $g$ as a function from ${\mathbb Z}_N^2$ to ${\mathbb C}$. We shall view the points of ${\mathbb Z}_N^2$, embedded in the way we described, as our sensors, and the values of $g$ on ${\mathbb Z}_N^2$ as the sampling values on those sensors.

Our first result is the following.

\begin{theorem} \label{thm:missing-values-unit-square}
Let $f$ be a real-valued function on $[0,1]^2$ which is 1-periodic in each variable and belongs to $C^2([0,1]^2)$. Fix an integer $N\ge 2$ and define a function $g:{\mathbb Z}_N^2\to{\mathbb R}$ by
$$
g(x_1,x_2)=f(x_1/N,x_2/N).
$$
Assume that
$$
N\|f\|_{L^2([0,1]^2)}^2\ge 8\|f\|_{C^2([0,1]^2)}^2.
$$
Define the discrete Fourier transform of $g$ by
$$
\widehat g(m)=\frac{1}{N}\sum_{x\in{\mathbb Z}_N^2}\chi(-x\cdot m)g(x),
$$
where here and throughout, $\chi(t)=e^{2\pi i t/N}$. Note that this matches the normalization in Theorem \ref{thm:ZNd_recovery} when $d=2$, since $N^{-d/2}=N^{-1}$.

Define the Fourier ratio by
$$
FR(g)=\frac{{\|\widehat g\|}_1}{{\|\widehat g\|}_2}.
$$

Let
$$
A_N=2\frac{\left|\int_{[0,1]^2} f(x)dx\right|}{\|f\|_{L^2([0,1]^2)}},
\quad
B_N=16\pi^2\frac{\|f\|_{C^2([0,1]^2)}}{\|f\|_{L^2([0,1]^2)}}\log N,
\quad
C_N=\frac{8\pi^2}{\|f\|_{L^2([0,1]^2)}}\frac{\|f\|_{C^2([0,1]^2)}}{N}.
$$

Define
$$
r_N=A_N+B_N+C_N.
$$
Fix $\eps\in(0,1/2)$. By Proposition \ref{prop:discretization}, the discretized function $g$ satisfies
$$
FR(g)\le r_N.
$$

Let $X\subset{\mathbb Z}_N^2$ be chosen uniformly at random among all subsets of cardinality
$$
|X|=C\frac{r_N^2}{\eps^2}\log(r_N/\eps)^2\log D,
$$
where $D=N^2$, $C$ is a sufficiently large absolute constant. Assume that the values $g(x)$ are observed for $x\in X$ and missing for $x\in{\mathbb Z}_N^2\setminus X$. Define the empirical norm by
$$
\|h\|_{L^2(X)}=\left(\sum_{x\in X}|h(x)|^2\right)^{1/2}.
$$
Let $g^*:{\mathbb Z}_N^2\to{\mathbb R}$ be a solution to the convex optimization problem
$$
\min_{h:{\mathbb Z}_N^2\to{\mathbb R}}{\|\widehat h\|}_1
$$
where the minimization is taken subject to the constraint
$$
\|g-h\|_{L^2(X)}\le\eps\|g\|_{L^2({\mathbb Z}_N^2)}.
$$
If $C$ is sufficiently large, then with high probability
$$
\|g^*-g\|_{L^2({\mathbb Z}_N^2)}\le 11.47\eps\|g\|_{L^2({\mathbb Z}_N^2)}.
$$
In particular, the missing values of $g$ on ${\mathbb Z}_N^2\setminus X$ can be recovered with controlled relative $L^2$ error by applying Theorem \ref{thm:ZNd_recovery} to the discretized data.
\end{theorem}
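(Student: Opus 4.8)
The plan is to deduce the theorem from the two ingredients already assembled: the deterministic Fourier ratio estimate of Proposition \ref{prop:discretization} and the abstract random--sampling recovery result, Theorem \ref{thm:ZNd_recovery}. The only analytic content specific to the continuous function $f$ is packaged into the number $r_N$; everything downstream is discrete compressed sensing applied to the vector $\widehat g\in\mathbb{C}^{\mathbb{Z}_N^2}$, and the proof is essentially a matter of invoking the second result with the compressibility parameter supplied by the first.

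First I would verify the hypotheses. The standing assumption $N\|f\|_{L^2([0,1]^2)}^2\ge 8\|f\|_{C^2([0,1]^2)}^2$ is exactly the one required by Proposition \ref{prop:discretization}, so that proposition applies and yields $FR(g)\le r_N$ with $r_N=A_N+B_N+C_N$; this is already asserted in the statement and I would simply quote it. I would also record the elementary lower bound $FR(g)=\|\widehat g\|_1/\|\widehat g\|_2\ge 1$ (the finite-dimensional inequality $\|v\|_1\ge\|v\|_2$), so that $r_N\ge 1$ and hence, for $\eps\in(0,1/2)$, $\log(r_N/\eps)>\log 2>0$; this makes the sample--count formula well posed and places the problem in the regime covered by Theorem \ref{thm:ZNd_recovery}. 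If $f\equiv 0$ the statement is vacuous, so we may assume $\|g\|_{L^2(\mathbb{Z}_N^2)}>0$ throughout.

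Next I would align normalizations so that Theorem \ref{thm:ZNd_recovery} applies verbatim at $d=2$. The discrete Fourier transform is taken with the factor $N^{-1}=N^{-d/2}$, so the analysis matrix is unitary and $\|\widehat g\|_2=\|g\|_{L^2(\mathbb{Z}_N^2)}$ by Parseval; the empirical norm $\|h\|_{L^2(X)}=(\sum_{x\in X}|h(x)|^2)^{1/2}$ is the unnormalized one for which the restricted--isometry and stability constants underlying Theorem \ref{thm:ZNd_recovery} are stated (cf.\ the Remark on normalization). With these identifications, the displayed basis--pursuit program is an instance of the program in Theorem \ref{thm:ZNd_recovery} with ambient dimension $D=N^2$, Fourier ratio bound $r=r_N$, noise level $\eps\|g\|_{L^2(\mathbb{Z}_N^2)}$, and sample budget $|X|=C(r_N^2/\eps^2)\log(r_N/\eps)^2\log D$. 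Applying that theorem, for $C$ a sufficiently large absolute constant a uniformly random $X$ of this cardinality satisfies, with probability at least $1-D^{-c}$, the stability bound $\|g^*-g\|_{L^2(\mathbb{Z}_N^2)}\le c_0\,\eps\|g\|_{L^2(\mathbb{Z}_N^2)}$, where $c_0$ is the explicit absolute constant produced there; substituting its numerical value gives $c_0\le 11.47$. The closing assertion about missing values is then immediate, since the error is controlled in $L^2$ over all of $\mathbb{Z}_N^2$, in particular over $\mathbb{Z}_N^2\setminus X$.

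The main obstacle is not conceptual but one of careful alignment with Theorem \ref{thm:ZNd_recovery}: making sure its compressibility hypothesis is genuinely phrased through the Fourier ratio $\|\widehat g\|_1/\|\widehat g\|_2$, and that its sampling operator and empirical norm carry the normalization above, so that the quoted stability constant is the one that applies and the promised value $11.47$ is simply read off. If instead Theorem \ref{thm:ZNd_recovery} is stated through best $s$-term approximation, one routine extra step is needed: from $FR(g)\le r_N$ one has $\sigma_s(\widehat g)_1\le\|\widehat g\|_1\le r_N\|\widehat g\|_2$, hence $\sigma_s(\widehat g)_1/\sqrt s\le\eps\|\widehat g\|_2=\eps\|g\|_{L^2(\mathbb{Z}_N^2)}$ as soon as $\sqrt s\ge r_N/\eps$, i.e.\ at the effective sparsity level $s\sim r_N^2/\eps^2$, which is precisely the level for which the displayed $|X|$ is the standard RIP sample count; the usual RIP--based $\ell^1$ stability estimate then closes the argument with the same constant.
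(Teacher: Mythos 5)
Your proposal is correct and follows essentially the same route as the paper: invoke Proposition \ref{prop:discretization} under the stated hypothesis to obtain $FR(g)\le r_N$, then apply Theorem \ref{thm:ZNd_recovery} with $d=2$, $r=r_N$, and $D=N^2$ to the program displayed in the statement. The extra observations you record (that $r_N\ge 1$, the normalization check, and the fallback via best $s$-term approximation) are harmless and consistent with the paper's argument.
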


\begin{remark}[Sampling and imputation viewpoint]
Theorem \ref{thm:missing-values-unit-square} can be read as a missing data recovery statement for gridded measurements. One starts with a smooth continuous field $f$ and observes its discretization $g$ only on a randomly chosen subset $X$ of the grid points, modeling random sensor failures or missing pixels. The conclusion is that the full grid of values can be reconstructed stably from the incomplete measurements by solving a standard $\ell^1$ minimization problem in the Fourier domain. The role of Proposition \ref{prop:discretization} is to provide an explicit deterministic bound $FR(g)\le r_N$ from smoothness, so that compressibility is guaranteed by regularity of the underlying continuous model rather than imposed as an a priori sparsity assumption.
\end{remark}

\begin{figure}[h]
\centering
\begin{tikzpicture}[
  node distance=2.15cm,
  box/.style={draw, rounded corners, align=center, minimum width=8.4cm, text width=8.0cm, inner sep=7pt}
]
\node (a) [box]
{Continuous model (imaging viewpoint) \\ underlying field $f$ on $[0,1]^2$ \\ $f\in C^2([0,1]^2)$};
\node (b) [box, below of=a]
{Measurement model \\ samples on a sensor grid \\ $g(x_1,x_2)=f(x_1/N,x_2/N)$ on ${\mathbb Z}_N^2$};
\node (c) [box, below of=b]
{Deterministic structure \\ regularity implies bounded Fourier ratio \\ $FR(g)\le r_N$ (Proposition \ref{prop:discretization})};
\node (d) [box, below of=c]
{Incomplete data \\ observed set $X\subset{\mathbb Z}_N^2$ chosen at random \\ $|X|=C\frac{r_N^2}{\eps^2}\log(r_N/\eps)^2\log(N^2)$};
\node (e) [box, below of=d]
{Reconstruction (inverse problem) \\ recover $g$ from $\{g(x):x\in X\}$ by convex optimization \\ minimize $\|\widehat h\|_1$ subject to data fidelity};
\node (f) [box, below of=e]
{Stability guarantee \\ $\|g^*-g\|_{L^2({\mathbb Z}_N^2)}\le 11.47\eps\|g\|_{L^2({\mathbb Z}_N^2)}$ \\ (Theorem \ref{thm:ZNd_recovery})};
\draw[->] (a) -- (b);
\draw[->] (b) -- (c);
\draw[->] (c) -- (d);
\draw[->] (d) -- (e);
\draw[->] (e) -- (f);
\end{tikzpicture}
\caption{Pipeline in inverse-problem form. A smooth continuous field is sampled on a grid, a Fourier ratio bound is obtained deterministically from regularity, and stable recovery from incomplete samples follows from $\ell^1$ minimization with an explicit error bound.}
\end{figure}
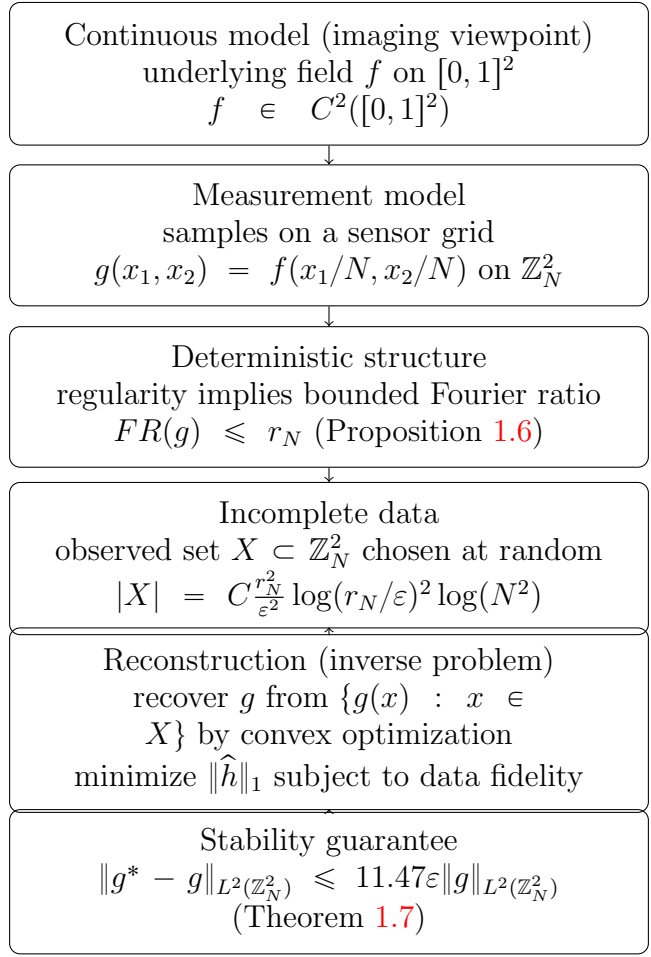

\begin{remark}[Algorithmic interpretation]
The convex optimization problems appearing in Theorems \ref{thm:missing-values-unit-square}, \ref{thm:ZNd_recovery}, and \ref{thm:sphere_missing_values} are standard $\ell^1$ minimization problems of the type commonly used in compressed sensing and inverse problems, often referred to as basis pursuit with noise. No new reconstruction algorithm is proposed here. Rather, the contribution of this paper is to provide deterministic and explicit conditions, expressed in terms of the Fourier ratio, under which such existing convex optimization procedures provably succeed in recovering a signal from incomplete measurements.
\end{remark}

\begin{remark}[Interpretation of the Fourier ratio bound]
Theorem \ref{thm:missing-values-unit-square} illustrates how the Fourier ratio framework developed in this paper leads to concrete recovery guarantees for discretized data with missing values. The result starts from a function on the unit square with mild smoothness, modeled by membership in $C^2([0,1]^2)$, and considers its discretization on a uniform grid.

Proposition \ref{prop:discretization} shows that the discretized function $g$ inherits an explicit bound on the Fourier ratio from the regularity of the underlying function $f$, provided the displayed lower bound assumption holds. In particular, it provides a deterministic estimate of the form
$$
FR(g)\le r_N.
$$
This estimate depends only on the smoothness of $f$ and the grid resolution and does not rely on any sparsity assumption. The three terms in $r_N$ have clear interpretations. The term $A_N$ captures the contribution of the mean value, the term $B_N$ reflects the logarithmic growth due to smoothness across frequencies, and the term $C_N$ represents a discretization error that decays with $N$.

Theorem \ref{thm:ZNd_recovery} then implies that any function on ${\mathbb Z}_N^2$ with a bounded Fourier ratio can be recovered stably from a random subset of its values by minimizing the $\ell^1$ norm of its Fourier transform. Theorem \ref{thm:missing-values-unit-square} combines these two results to yield a missing values recovery theorem for discretized smooth data. It guarantees that, with high probability, the entire discretized signal can be reconstructed from a number of observed samples that grows only polylogarithmically with the grid size, with an explicit and stable error bound.

From an applied perspective, this provides a rigorous justification for imputing missing measurements in gridded data sets using compressed sensing techniques \cite{CRT06,FR13}, without assuming sparsity a priori. Instead, sparsity emerges implicitly from regularity through the Fourier ratio.
\end{remark}

\begin{proposition} \label{prop:discretization}
Let $f$ be a real-valued function on ${[0,1]}^2$ which is 1-periodic in each variable and belongs to $C^2({[0,1]}^2)$. For a positive integer $N$, define a function $g:{\mathbb Z}_N^2\to{\mathbb R}$ by
$$
g(x_1,x_2)=f(x_1/N,x_2/N), \qquad (x_1,x_2)\in{\mathbb Z}_N^2.
$$
Assume that
$$
N\|f\|_{L^2([0,1]^2)}^2\ge 8\|f\|_{C^2([0,1]^2)}^2.
$$
Define the discrete Fourier transform of $g$ by
$$
\widehat g(m)=\frac{1}{N}\sum_{x\in{\mathbb Z}_N^2}\chi(-x\cdot m)g(x).
$$
Then
$$
FR(g)=\frac{{\|\widehat g\|}_1}{{\|\widehat g\|}_2}
$$
satisfies the bound
$$
FR(g)\le
2\frac{\left|\int_{[0,1]^2} f(x)dx\right|}{\|f\|_{L^2([0,1]^2)}}
+16\pi^2\frac{\|f\|_{C^2([0,1]^2)}}{\|f\|_{L^2([0,1]^2)}}\log N
+\frac{8\pi^2}{\|f\|_{L^2([0,1]^2)}}\frac{\|f\|_{C^2([0,1]^2)}}{N}.
$$
In particular, if $f\ge 0$ and $\mu=\int_{[0,1]^2} f(x)dx>0$, then
$$
FR(g)\le
2+16\pi^2\frac{\|f\|_{C^2([0,1]^2)}}{\mu}\log N
+\frac{8\pi^2}{\mu}\frac{\|f\|_{C^2([0,1]^2)}}{N}.
$$
\end{proposition}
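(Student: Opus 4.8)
The estimate is obtained by dividing an upper bound for $\|\widehat{g}\|_1$ by a lower bound for $\|\widehat{g}\|_2$. First I would record the aliasing identity relating $\widehat{g}$ to the continuous Fourier coefficients $\widehat{f}(k)=\int_{[0,1]^2}f(y)e^{-2\pi i k\cdot y}\,dy$. Since $f$ is $1$-periodic and $C^2$, all partial derivatives up to order two are bounded, so $f\in H^2$ (that is, $\sum_{k\in\mathbb{Z}^2}(1+|k|^2)^2|\widehat{f}(k)|^2<\infty$), and by Cauchy--Schwarz this already forces $\widehat{f}\in\ell^1$, so the Fourier series of $f$ converges absolutely. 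Substituting $y=x/N$ and evaluating the resulting geometric sums over $x\in\mathbb{Z}_N^2$ yields $\widehat{g}(m)=N\sum_{k\equiv m\,(\mathrm{mod}\,N)}\widehat{f}(k)$. Since every $k\in\mathbb{Z}^2$ lies in exactly one residue class mod $N$, summing $|\widehat{g}(m)|\le N\sum_{k\equiv m}|\widehat{f}(k)|$ over $m\in\mathbb{Z}_N^2$ collapses the double sum to $\|\widehat{g}\|_1\le N\|\widehat{f}\|_1$, and I would split $\|\widehat{f}\|_1=|\widehat{f}(0)|+\sum_{0<|k|\le N}|\widehat{f}(k)|+\sum_{|k|>N}|\widehat{f}(k)|$, noting $\widehat{f}(0)=\int_{[0,1]^2}f$. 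For the denominator, the normalization $\widehat{g}(m)=N^{-1}\sum_x\chi(-x\cdot m)g(x)$ is the unitary discrete Fourier transform on $\mathbb{Z}_N^2$, so Plancherel gives $\|\widehat{g}\|_2=\|g\|_{\ell^2(\mathbb{Z}_N^2)}$; moreover $N^{-2}\|g\|_{\ell^2(\mathbb{Z}_N^2)}^2=N^{-2}\sum_x|f(x/N)|^2$ is a Riemann sum for $\|f\|_{L^2([0,1]^2)}^2$ whose error is $O(\|f\|_{C^2}^2/N)$ (a cellwise Lipschitz estimate applied to the $C^1$ function $f^2$, or, more sharply, Poisson summation for the grid). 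The hypothesis $N\|f\|_{L^2}^2\ge 8\|f\|_{C^2}^2$ is exactly the condition making this error at most a fixed fraction of $\|f\|_{L^2}^2$, so that $\|\widehat{g}\|_2\ge\tfrac{1}{2}N\|f\|_{L^2([0,1]^2)}$.

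Next I would estimate the three pieces of $\|\widehat{f}\|_1$ separately. The $k=0$ term contributes $N\,|\int_{[0,1]^2}f|$, which after division by $\tfrac{1}{2}N\|f\|_{L^2}$ produces the term $A_N$. For $0<|k|\le N$, integrating by parts twice in the coordinate of largest modulus (which is nonzero) gives the standard bound $|\widehat{f}(k)|\le\|f\|_{C^2}/(2\pi^2|k|^2)$, and the two-dimensional lattice sum $\sum_{0<|k|\le N}|k|^{-2}=O(\log N)$ yields $B_N$, with ample room in the constant $16\pi^2$. The one delicate point is the tail $\sum_{|k|>N}|\widehat{f}(k)|$: in dimension two the pointwise bound $|\widehat{f}(k)|\lesssim\|f\|_{C^2}|k|^{-2}$ is exactly the critical, non-summable decay, so summing term by term diverges and one must instead use Cauchy--Schwarz together with the square-function estimate $\sum_{k}(2\pi|k|)^4|\widehat{f}(k)|^2\le 2\big(\|\partial_1^2 f\|_{L^2}^2+\|\partial_2^2 f\|_{L^2}^2\big)\lesssim\|f\|_{C^2}^2$ (Parseval, together with $\|\partial^\alpha f\|_{L^2([0,1]^2)}\le\|\partial^\alpha f\|_{L^\infty}$ since the square has measure one). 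Since $\sum_{|k|>N}|k|^{-4}=O(N^{-2})$, this gives $\sum_{|k|>N}|\widehat{f}(k)|=O(\|f\|_{C^2}/N)$, and division by $\tfrac{1}{2}N\|f\|_{L^2}$ produces $C_N$. Collecting the three contributions and being liberal with absolute constants yields $FR(g)\le A_N+B_N+C_N$. I expect this square-function control of the aliasing tail, in place of a pointwise bound, to be the only nontrivial step; the rest is bookkeeping of constants, and it is also where the normalization hypothesis on $N$ enters.

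Finally, for the ``in particular'' statement: if $f\ge 0$ and $\mu=\int_{[0,1]^2}f>0$, then Cauchy--Schwarz on the unit square gives $\mu=\|f\|_{L^1([0,1]^2)}\le\|f\|_{L^2([0,1]^2)}$, so $A_N=2\mu/\|f\|_{L^2}\le 2$, while replacing $\|f\|_{L^2}$ by the smaller quantity $\mu$ in the denominators of $B_N$ and $C_N$ only enlarges the bound. The displayed inequality follows at once.
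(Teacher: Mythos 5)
Your proposal is correct, but it takes a genuinely different route from the paper. The paper never passes to the continuous Fourier coefficients: it works entirely on $\mathbb{Z}_N^2$, bounding $|\widehat g(0)|$ by the Riemann-sum error estimate for $f$ and bounding $|\widehat g(m)|$ for $m\neq 0$ by applying \emph{discrete} summation by parts twice in the coordinate where the wrapped frequency $|m_j|_*$ is largest, using $\|\Delta_j^2 g\|_\infty\le N^{-2}\|f\|_{C^2}$; the frequency sum is then finite by construction, so no tail issue ever arises. You instead invoke the aliasing identity $\widehat g(m)=N\sum_{k\equiv m\ (\mathrm{mod}\ N)}\widehat f(k)$, collapse to $\|\widehat g\|_1\le N\|\widehat f\|_{\ell^1(\mathbb{Z}^2)}$, and split the continuous spectrum at $|k|=N$. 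You correctly identify the one place where this route is delicate: in dimension two the pointwise bound $|\widehat f(k)|\lesssim\|f\|_{C^2}|k|^{-2}$ is not summable over $|k|>N$, and your Cauchy--Schwarz/square-function treatment of the tail (via $\sum_k(2\pi|k|)^4|\widehat f(k)|^2\le 2(\|\partial_1^2f\|_{L^2}^2+\|\partial_2^2f\|_{L^2}^2)$, which uses periodicity to kill boundary terms) is exactly what is needed and gives a tail of size $O(\|f\|_{C^2}/N)$, well within the $8\pi^2/N$ budget. Both arguments use the same lower bound for $\|\widehat g\|_2$ (Parseval plus the Riemann-sum estimate for $f^2$, with the hypothesis $N\|f\|_{L^2}^2\ge 8\|f\|_{C^2}^2$ entering in the same way), and your constant accounting checks out throughout, including the observation that the $A_N$ term comes out exactly as $2|\int f|/\|f\|_{L^2}$ since the $k\not\equiv 0$ aliases are absorbed into the other two sums. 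The trade-off: the paper's discrete argument is self-contained and avoids any convergence or aliasing considerations, while your version is conceptually closer to the continuous smoothness-implies-decay heuristic (and to the spherical analogue, Proposition \ref{prop:sphere_discretization}), at the cost of the extra tail estimate.
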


\begin{theorem}[Recovery on ${\mathbb Z}_N^d$] \label{thm:ZNd_recovery}
Let $N$ and $d$ be positive integers and let $f:{\mathbb Z}_N^d\to{\mathbb R}$ be a function. Define the discrete Fourier transform of $f$ by
$$
\widehat f(m)=\frac{1}{N^{d/2}}\sum_{x\in{\mathbb Z}_N^d}\chi(-x\cdot m)f(x).
$$

Throughout the paper, all discrete Fourier transforms are normalized by the factor $N^{-d/2}$ so that Parseval’s identity holds in the form $\|\widehat{f}\|_2=\|f\|_{L^2(\mathbb{Z}_N^d)}$, and specializations to lower dimensions (such as $d=2$) are understood in this convention.

With this normalization Parseval's identity takes the form
$$
\|\widehat f\|_2=\|f\|_{L^2({\mathbb Z}_N^d)}.
$$
Define the Fourier ratio by
$$
FR(f)=\frac{\|\widehat f\|_1}{\|\widehat f\|_2}.
$$
Fix parameters $\eps\in(0,1/2)$ and $r\ge 1$, and assume that $FR(f)\le r$. Let
$$
D=N^d.
$$
Let $X\subset{\mathbb Z}_N^d$ be a random subset of cardinality
$$
|X|=C\frac{r^2}{\eps^2}\log(r/\eps)^2\log D,
$$
chosen uniformly at random among all subsets of this cardinality. Define the empirical norm
$$
\|g\|_{L^2(X)}=\left(\sum_{x\in X}|g(x)|^2\right)^{1/2}.
$$
Let $f^*:{\mathbb Z}_N^d\to{\mathbb R}$ be a solution to the convex optimization problem
$$
\min_{h:{\mathbb Z}_N^d\to{\mathbb R}}\|\widehat h\|_1
\quad\text{subject to}\quad
\|f-h\|_{L^2(X)}\le \eps\|f\|_{L^2({\mathbb Z}_N^d)}.
$$
If $C$ is a sufficiently large absolute constant, then with high probability, one has
$$
\|f^*-f\|_{L^2({\mathbb Z}_N^d)}\le 11.47\, \eps\|f\|_{L^2({\mathbb Z}_N^d)}.
$$
\end{theorem}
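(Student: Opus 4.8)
The plan is to read this as a textbook stable-and-robust sparse recovery statement for a randomly subsampled bounded orthonormal system, with the Fourier ratio hypothesis supplying the compressibility that is usually imposed as an explicit sparsity assumption. The first step is to turn $FR(f)\le r$ into a compressibility estimate for $\widehat f$. Since $\|\widehat f\|_1\le r\|\widehat f\|_2=r\|f\|_{L^2(\mathbb Z_N^d)}$, the trivial bound $\sigma_s(\widehat f)_1\le\|\widehat f\|_1$ on the best $s$-term $\ell^1$ approximation error gives $\sigma_s(\widehat f)_1/\sqrt{s}\le(r/\sqrt{s})\,\|f\|_{L^2(\mathbb Z_N^d)}$. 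Choosing the working sparsity level $s:=\lceil r^2/\eps^2\rceil$ makes this at most $\eps\|f\|_{L^2(\mathbb Z_N^d)}$, so for recovery purposes $\widehat f$ behaves like an $s$-sparse vector with an $\ell^1$ tail of size $O(\eps\sqrt{s}\,\|f\|_{L^2(\mathbb Z_N^d)})$.

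Next I would identify the measurement operator. Observing $f$ on $X$ amounts to applying the $|X|\times N^d$ submatrix $A_X$ of the unitary discrete Fourier matrix whose rows are indexed by $X$, so that $\|f-h\|_{L^2(X)}=\|A_X(\widehat f-\widehat h)\|_2$; after the rescaling $\Phi:=\sqrt{N^d/|X|}\,A_X$, the matrix $\Phi$ is precisely the sampling matrix of the bounded orthonormal system of additive characters on $\mathbb Z_N^d$, whose boundedness constant equals $1$. The analytic input is then the restricted isometry property for randomly subsampled bounded orthonormal systems \cite{RV08,Rau10,FR13}: if $X$ is chosen uniformly among subsets of cardinality $|X|\ge C_0\,\delta^{-2}s\log^2(s)\log D$, then $\Phi$ has restricted isometry constant $\delta_{2s}\le\delta$ with probability at least $1-D^{-c}$. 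Since $s\asymp r^2/\eps^2$ we have $\log^2(s)\log D\asymp\log(r/\eps)^2\log D$, so the cardinality $|X|=C\,r^2\eps^{-2}\log(r/\eps)^2\log D$ in the statement is of the required order once $\delta$ is fixed to a small absolute constant and $C$ is large enough. (Here $X$ has fixed size rather than being an i.i.d.\ or Bernoulli sample; one either cites the without-replacement version of these estimates directly or invokes the standard negative-association comparison, which only improves the concentration.)

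With the restricted isometry property available, I would invoke the standard basis-pursuit-denoising stability theorem \cite{CRT06,FR13}: if $\delta_{2s}$ is below the relevant threshold, then every minimizer $h^*$ of $\|\widehat h\|_1$ subject to $\|\Phi(\widehat h-\widehat f)\|_2\le\eta$ obeys $\|\widehat{h^*}-\widehat f\|_2\le C_1 s^{-1/2}\sigma_s(\widehat f)_1+C_2\eta$ with $C_1,C_2$ explicit in $\delta_{2s}$. The feasibility constraint $\|f-h\|_{L^2(X)}\le\eps\|f\|_{L^2(\mathbb Z_N^d)}$ becomes, after the $\sqrt{N^d/|X|}$ rescaling that turns $A_X$ into $\Phi$, a constraint $\|\Phi(\widehat h-\widehat f)\|_2\le\eta$ with $\eta$ proportional to $\eps\|f\|_{L^2(\mathbb Z_N^d)}$; this is exactly the normalization accounting flagged in the remark on empirical norms, and in the averaged convention the proportionality constant is $1$. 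Combining with the first step, $\|h^*-f\|_{L^2(\mathbb Z_N^d)}=\|\widehat{h^*}-\widehat f\|_2\le(C_1+C_2)\,\eps\|f\|_{L^2(\mathbb Z_N^d)}$, and fixing $\delta$ (hence $\delta_{2s}$) small enough that $C_1+C_2\le 11.47$ yields the claimed bound. The only randomness is in the choice of $X$, so the conclusion holds on the high-probability event produced by the restricted isometry step.

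I expect the main obstacle to lie at the interface of the last two steps: securing the restricted isometry property in exactly the quantitative form needed — with the $\log^2(s)\log D$ (equivalently $\log(r/\eps)^2\log D$) sample count and for a uniform subset of fixed cardinality rather than a with-replacement model — and then threading the three relevant norms, namely the empirical $\|\cdot\|_{L^2(X)}$, the restricted-isometry-normalized $\|\Phi\cdot\|_2$, and $\|\cdot\|_{L^2(\mathbb Z_N^d)}$, carefully enough that the noise term in the stability estimate comes out with the clean factor $\eps$ and the overall constant lands at $11.47$. By contrast, the passage from the Fourier ratio to compressibility in the first step and the final application of basis-pursuit-denoising stability are essentially mechanical.
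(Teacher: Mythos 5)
Your proposal follows essentially the same route as the paper's proof: the Fourier ratio hypothesis is converted into compressibility at sparsity level $s\asymp r^2/\eps^2$ (the paper cites an $\ell^2$-approximation lemma from \cite{A2025} where you use the $\ell^1$-tail bound $\sigma_s(\widehat f)_1\le\|\widehat f\|_1\le r\|f\|_{L^2({\mathbb Z}_N^d)}$ --- the same idea), and then the restricted isometry property for the randomly subsampled character system and the standard basis-pursuit-denoising stability theorem are invoked exactly as in the paper. The one normalization subtlety you flag at the end, namely reconciling the unnormalized empirical norm $\|\cdot\|_{L^2(X)}$ with the RIP-normalized measurement so that the noise term comes out as $O(\eps\|f\|_{L^2({\mathbb Z}_N^d)})$ rather than carrying a factor of $\sqrt{N^d/|X|}$, is left at the same level of detail in the paper's own proof, which likewise defers to the cited references and to its remark on empirical norms.
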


\begin{remark}[Relation to sparse spherical harmonic compressed sensing]
Rauhut and Ward \cite{RW12} and related work on spherical harmonic compressed sensing show that if the spherical harmonic coefficient vector $\widehat f$ is $s$-sparse or suitably compressible, then $f$ can be recovered stably from $q$ random point evaluations on $S^2$ via $\ell^1$ minimization, with $q$ scaling proportionally to $s$ up to polylogarithmic factors, after the standard bounded-orthonormal-system or preconditioning setup for spherical harmonics.

Theorem \ref{thm:sphere_missing_values} is compatible with that framework, but replaces an imposed sparsity hypothesis by an a priori Fourier ratio bound. Indeed, if $\widehat f$ is supported on at most $s$ indices, then $\|\widehat f\|_1\le \sqrt{s}\|\widehat f\|_2$, hence $FR_L(f)\le \sqrt{s}$, so Theorem \ref{thm:sphere_missing_values} yields the same $q\sim s$ scaling up to logarithmic factors in the sparse case. The novelty here is that Proposition \ref{prop:sphere_discretization} supplies a deterministic bound $FR_L(f)\le r_L$ from smoothness, so stable recovery follows for bandlimited signals without any a priori sparsity assumption on $\widehat f$.
\end{remark}

\begin{remark}[Spherical sampling viewpoint]
Theorem \ref{thm:sphere_missing_values} is the spherical analogue of Theorem \ref{thm:missing-values-unit-square}. The unknown is a global field on $S^2$ and the measurements are point evaluations at randomly chosen locations, modeling incomplete directional or geophysical data on a spherical geometry. The theorem states that if the signal is bandlimited and has mild regularity, then an explicit Fourier ratio bound follows deterministically, and this bound is sufficient to guarantee stable recovery from incomplete samples by standard $\ell^1$ minimization of spherical harmonic coefficients. In this way, regularity replaces sparsity as the structural input needed for compressed sensing type recovery on the sphere.
\end{remark}

\subsection{Data distribution on the unit sphere}

In many applications, the underlying data live on a curved surface rather than on a Euclidean grid. A basic model is a function on the unit sphere $S^2\subset{\mathbb R}^3$. In this setting it is natural to represent the signal in the spherical harmonic basis and to treat bandlimiting as the analogue of having a finite bandwidth in the Euclidean Fourier transform \cite{RW12}.

Fix a positive integer $L$, and let $V_L$ denote the space of spherical harmonics of degree at most $L$. We think of $f\in V_L$ as the unknown signal. As in the square case, we imagine that only a randomly chosen collection of point evaluations of $f$ is available, and the rest of the values are missing. The goal is to recover $f$ from these random samples by solving an $\ell^1$ minimization problem for the spherical harmonic coefficients, as in spherical harmonic compressed sensing \cite{RW12}.

The key step is to control a spherical Fourier ratio. Proposition \ref{prop:sphere_discretization} shows that if $f$ also has mild smoothness, for example $f\in C^2(S^2)$, then one obtains an explicit upper bound on the spherical Fourier ratio in terms of $\|f\|_{C^2(S^2)}$, $\|f\|_{L^2(S^2)}$, and $L$. This replaces the discretization estimate in the square setting. Once this bound is available, the spherical compressed sensing theorem implies that a random set of point samples of size comparable to
$$
\frac{r_L^2}{\eps^2}\log\left(\frac{r_L}{\eps}\right)^2\log\left((L+1)^2\right)
$$
is enough to guarantee stable recovery in $L^2(S^2)$ by $\ell^1$ minimization, with high probability \cite{RW12,FR13}.

Our next result makes this precise and gives a missing values recovery theorem on $S^2$ which is directly parallel to Theorem \ref{thm:missing-values-unit-square}.

\begin{theorem} \label{thm:sphere_missing_values}
Let $L$ be a positive integer and let $V_L$ denote the space of spherical harmonics of degree at most $L$ on $S^2$. Let $f$ be a real-valued function in $V_L$ which also belongs to $C^2(S^2)$. Set
$$
A_L=
\frac{\left|\int_{S^2} f(\omega)d\sigma(\omega)\right|}{\|f\|_{L^2(S^2)}}.
$$
Set
$$
B_L=
C_0
\frac{\|f\|_{C^2(S^2)}}{\|f\|_{L^2(S^2)}}
\log L.
$$
Set
$$
C_L=
C_0
\frac{\|f\|_{C^2(S^2)}}{\|f\|_{L^2(S^2)}}
\frac{1}{L}.
$$
Define
$$
r_L=A_L+B_L+C_L.
$$
Fix
$$
\eps\in (0,1/2).
$$
By Proposition \ref{prop:sphere_discretization}, one has
$$
FR_L(f)\le r_L,
$$
where $C_0$ is the absolute constant appearing there.

Define
$$
D=(L+1)^2.
$$
Sample points $\omega_1,\dots,\omega_q$ independently and uniformly from $S^2$, where
$$
q=
C
\frac{r_L^2}{\eps^2}
\log\left(\frac{r_L}{\eps}\right)^2
\log D.
$$
Define the sampled multiset
$$
\Omega=(\omega_1,\dots,\omega_q).
$$
Define the empirical norm on the multiset by
$$
\|g\|_{L^2(\Omega)}=
\left(\sum_{j=1}^q|g(\omega_j)|^2\right)^{1/2}.
$$
Assume that the values $f(\omega_j)$ are observed for $j=1,\dots,q$ and missing elsewhere on the sphere. Let $f^*\in V_L$ be a solution to the convex program
$$
\min_{h\in V_L}\|\widehat h\|_1
$$
subject to the constraint
$$
\|f-h\|_{L^2(\Omega)}\le \eps \|f\|_{L^2(S^2)}.
$$
If $C$ is a sufficiently large universal constant, then with high probability
$$
\|f^*-f\|_{L^2(S^2)}\le 11.47\eps\|f\|_{L^2(S^2)}.
$$
In particular, a bandlimited spherical signal in $V_L$ with the explicit Fourier ratio bound supplied by Proposition \ref{prop:sphere_discretization} can be recovered from random point samples by the $\ell^1$ minimization procedure in Theorem \ref{thm:sphere_121} \cite{RW12}.
\end{theorem}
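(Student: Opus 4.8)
The plan is to prove Theorem \ref{thm:sphere_missing_values} as the spherical mirror of Theorem \ref{thm:missing-values-unit-square}: I would combine the deterministic Fourier-ratio bound of Proposition \ref{prop:sphere_discretization} with a black-box stable $\ell^1$-recovery theorem for random point evaluations of bandlimited spherical signals (Theorem \ref{thm:sphere_121}, the spherical analogue of Theorem \ref{thm:ZNd_recovery}; cf. \cite{RW12,FR13}). The first step is purely deterministic: since $f\in V_L$ and $f\in C^2(S^2)$, Proposition \ref{prop:sphere_discretization} gives $FR_L(f)=\|\widehat f\|_1/\|\widehat f\|_2\le r_L=A_L+B_L+C_L$, an estimate depending only on $\|f\|_{C^2(S^2)}$, $\|f\|_{L^2(S^2)}$, and $L$, with no sparsity hypothesis. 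One records the trivial but necessary fact that $r_L\ge FR_L(f)\ge 1$, so the hypothesis $r\ge 1$ of the recovery theorem holds, and $\eps\in(0,1/2)$ is assumed.

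The second step converts this compressibility into recovery. I would write the measurements in coefficient space: $f(\omega_j)=\sum_{0\le\ell\le L,\,|m|\le\ell}\widehat f(\ell,m)\,Y_\ell^m(\omega_j)$, i.e. $y=A\widehat f$ with $A$ the $q\times D$ spherical-harmonic evaluation matrix, $D=(L+1)^2$. Parseval on $S^2$ gives $\|f\|_{L^2(S^2)}=\|\widehat f\|_2$, so the fidelity constraint $\|f-h\|_{L^2(\Omega)}\le\eps\|f\|_{L^2(S^2)}$ is exactly an $\ell^2$ bound $\|A(\widehat f-\widehat h)\|_2\le\eps\|\widehat f\|_2$, while the objective $\|\widehat h\|_1$ is the $\ell^1$ norm of the unknown coefficient vector. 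With $q=C r_L^2\eps^{-2}\log(r_L/\eps)^2\log D$ random samples and the bound $FR_L(f)\le r_L$ playing the role of the compressibility (effective-sparsity) parameter, Theorem \ref{thm:sphere_121} yields, with high probability, $\|\widehat{f^*}-\widehat f\|_2\le 11.47\,\eps\|\widehat f\|_2$ for any minimizer $f^*$, and Parseval once more gives $\|f^*-f\|_{L^2(S^2)}\le 11.47\,\eps\|f\|_{L^2(S^2)}$. Since $f$ is real-valued one works in a real orthonormal basis of $V_L$, or equivalently in the conjugate-symmetric subspace of complex coefficients; this is cosmetic.

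The real content sits inside the two cited results, and that is where I expect the work to concentrate. Proposition \ref{prop:sphere_discretization} needs the spectral-decay estimate for $C^2$ functions on $S^2$ — that each coefficient $\widehat f(\ell,m)$ decays like $\ell^{-2}$ up to a factor of $\|f\|_{C^2(S^2)}$ — which, summed over $|m|\le\ell$ via the addition theorem $\sum_{|m|\le\ell}|Y_\ell^m(\omega)|^2=\frac{2\ell+1}{4\pi}$ and then over $\ell\le L$, produces the $\log L$ term $B_L$ and the $1/L$ term $C_L$, with $A_L$ carrying the degree-zero (mean) contribution. Theorem \ref{thm:sphere_121} carries the probabilistic heart: showing that the rescaled evaluation matrix satisfies a restricted isometry (or robust null space) property on vectors of effective sparsity of order $r_L^2$, then feeding this into the standard stable $\ell^1$-recovery bound that produces the explicit constant $11.47$. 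The wrinkle specific to $S^2$, absent on the torus where the Fourier characters are uniformly bounded, is that zonal harmonics are not uniformly bounded ($\|Y_\ell^0\|_\infty\sim\sqrt\ell$), so the bounded-orthonormal-system constant for naive uniform sampling is not $O(1)$; this is the Rauhut--Ward preconditioning issue, resolved by reweighting via a power of $\sin\theta$ and absorbing the adjustment into the absolute constant $C$. Granting Theorem \ref{thm:sphere_121}, the present theorem reduces to the bookkeeping of the two steps above.
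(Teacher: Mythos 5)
Your proposal matches the paper's proof of Theorem \ref{thm:sphere_missing_values}, which is exactly the two-step reduction you describe: invoke Proposition \ref{prop:sphere_discretization} to get $FR_L(f)\le r_L$, then apply Theorem \ref{thm:sphere_121} with $r=r_L$ (the paper likewise defers the restricted isometry input and the Rauhut--Ward preconditioning issue to the proof of Theorem \ref{thm:sphere_121} and to \cite{RW12}). The argument is correct and takes essentially the same approach.
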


\begin{remark}[Spherical Fourier ratio interpretation]
Theorem \ref{thm:sphere_missing_values} is the spherical analogue of Theorem \ref{thm:missing-values-unit-square}. The result starts from a bandlimited function on the sphere, modeled by membership in $V_L$, and interprets its point evaluations as measurements taken by sensors distributed randomly on $S^2$.

The role of regularity is to supply a deterministic bound on the spherical Fourier ratio. Proposition \ref{prop:sphere_discretization} shows that if $f\in C^2(S^2)$ then one has an explicit estimate of the form
$$
FR_L(f)\le r_L.
$$
This estimate depends on $\|f\|_{C^2(S^2)}$, $\|f\|_{L^2(S^2)}$, and the bandwidth parameter $L$. It does not assume any sparsity of $f$ in the spatial variable.

Once the Fourier ratio is controlled, Theorem \ref{thm:sphere_121} implies that $f$ can be recovered stably from a random collection of its point samples by minimizing the $\ell^1$ norm of its spherical harmonic coefficients. Theorem \ref{thm:sphere_missing_values} combines these two ingredients to give a missing values recovery theorem on $S^2$ with an explicit sample complexity and an explicit relative $L^2(S^2)$ error bound. This is closely aligned with the spherical harmonic compressed sensing literature \cite{RW12} and the general stability theory for $\ell^1$ recovery \cite{CRT06,FR13}.

From an applied perspective, this provides a rigorous justification for imputing missing spherical measurements using compressed sensing techniques \cite{CRT06,FR13,RW12}. The guiding principle is that regularity forces a quantitative form of compressibility of the spherical harmonic expansion, expressed through the Fourier ratio, and this is enough to guarantee stable recovery from random samples.
\end{remark}

\begin{figure}[h]
\centering
\begin{tikzpicture}[
  node distance=2.15cm,
  box/.style={draw, rounded corners, align=center,
              minimum width=8.4cm, text width=8.0cm, inner sep=7pt}
]
\node (a) [box]
{Spherical data (imaging viewpoint) \\ underlying signal $f$ on $S^2$ \\ $f\in V_L\cap C^2(S^2)$};
\node (b) [box, below of=a]
{Spectral representation \\ spherical harmonic expansion \\ coefficients in $\{Y_\ell^m\}_{0\le \ell\le L}$};
\node (c) [box, below of=b]
{Deterministic structure \\ regularity and bandlimiting imply \\ $FR_L(f)\le r_L$ (Proposition \ref{prop:sphere_discretization})};
\node (d) [box, below of=c]
{Incomplete measurements \\ random sample multiset $\Omega$ on $S^2$ \\ $q=C\frac{r_L^2}{\eps^2}\log(r_L/\eps)^2\log((L+1)^2)$};
\node (e) [box, below of=d]
{Reconstruction (inverse problem) \\ recover $f$ from point samples by convex optimization \\ minimize $\|\widehat h\|_1$ subject to data fidelity};
\node (f) [box, below of=e]
{Stability guarantee \\ $\|f^*-f\|_{L^2(S^2)}\le 11.47\eps\|f\|_{L^2(S^2)}$ \\ (Theorem \ref{thm:sphere_121})};
\draw[->] (a) -- (b);
\draw[->] (b) -- (c);
\draw[->] (c) -- (d);
\draw[->] (d) -- (e);
\draw[->] (e) -- (f);
\end{tikzpicture}
\caption{Reconstruction pipeline for spherical inverse problems. A bandlimited and smooth signal on the sphere is sampled at random locations, regularity yields an explicit spherical Fourier ratio bound, and stable recovery from incomplete point measurements follows from $\ell^1$ minimization of spherical harmonic coefficients.}
\end{figure}
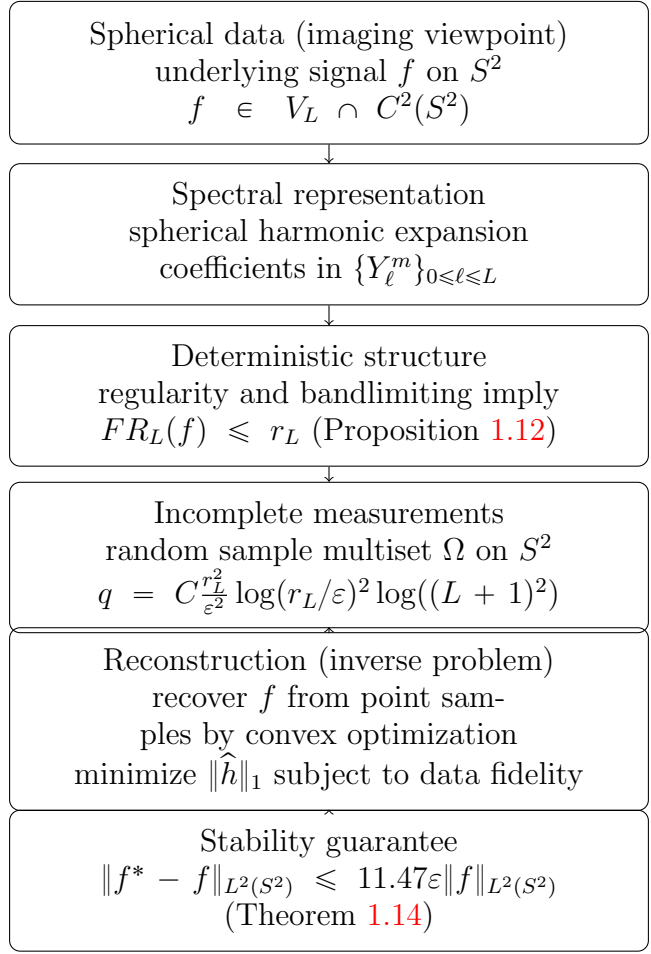

\begin{proposition} \label{prop:sphere_discretization}
Let $L$ be a positive integer and let $V_L$ denote the space of spherical harmonics of degree at most $L$ on $S^2$. Let $f$ be a real-valued function in $V_L\cap C^2(S^2)$. Let $\{Y_\ell^m\}_{0\le \ell\le L,-\ell\le m\le \ell}$ denote the standard orthonormal basis of spherical harmonics on $S^2$ in $L^2(S^2)$.

Suppose that $\{\omega_j,w_j\}_{j=1}^M$ is a quadrature rule on $S^2$ which is exact for spherical harmonics of degree at most $2L$, in the sense that
$$
\int_{S^2} P(\omega)d\sigma(\omega)=\sum_{j=1}^M w_j P(\omega_j)
$$
for every spherical harmonic $P$ of degree at most $2L$ \cite{SW04,BRV13}. Define the sampled function $g:\{1,\dots,M\}\to{\mathbb R}$ by
$$
g(j)=f(\omega_j), \qquad j=1,\dots,M.
$$
For $0\le \ell\le L$ and $-\ell\le m\le \ell$, define the discrete coefficients by
$$
\widehat g(\ell,m)=\sum_{j=1}^M w_j g(j)Y_\ell^m(\omega_j).
$$
Also define the continuous spherical harmonic coefficients by
$$
\widehat f(\ell,m)=\int_{S^2} f(\omega)Y_\ell^m(\omega)d\sigma(\omega).
$$
Since $f\in V_L$, the product $fY_\ell^m$ is a spherical polynomial of degree at most $2L$, so the exactness hypothesis applies with $P=fY_\ell^m$. Consequently, for every $0\le \ell\le L$ and $-\ell\le m\le \ell$ one has
$$
\widehat g(\ell,m)=\widehat f(\ell,m).
$$
In particular Parseval's identity holds in the form
$$
\sum_{\ell=0}^L\sum_{m=-\ell}^{\ell}|\widehat g(\ell,m)|^2
=
\sum_{\ell=0}^L\sum_{m=-\ell}^{\ell}|\widehat f(\ell,m)|^2
=
\|f\|_{L^2(S^2)}^2,
$$
and the discrete energy satisfies
$$
\sum_{j=1}^M w_j|g(j)|^2=\|f\|_{L^2(S^2)}^2.
$$
Define
$$
FR_L(f)=\frac{\|\widehat f\|_1}{\|\widehat f\|_2}.
$$
Then
$$
FR_L(f)=\frac{\|\widehat g\|_1}{\|\widehat g\|_2}.
$$
Moreover, one has
$$
FR_L(f)\le
\frac{\left|\int_{S^2} f(\omega)d\sigma(\omega)\right|}{\|f\|_{L^2(S^2)}}
+
C_0\frac{\|f\|_{C^2(S^2)}}{\|f\|_{L^2(S^2)}}\log L
+
C_0\frac{\|f\|_{C^2(S^2)}}{\|f\|_{L^2(S^2)}}\frac{1}{L},
$$
where one may take
$$
C_0=48\sqrt{\pi}.
$$
In particular, if $f\ge 0$ and
$$
\mu=\int_{S^2} f(\omega)d\sigma(\omega)>0,
$$
then
$$
FR_L(f)\le
\sqrt{4\pi}
+
C_0\sqrt{4\pi}\frac{\|f\|_{C^2(S^2)}}{\mu}\log L
+
C_0\sqrt{4\pi}\frac{\|f\|_{C^2(S^2)}}{\mu}\frac{1}{L}.
$$
\end{proposition}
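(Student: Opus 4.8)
The plan is to reduce the whole statement to an upper bound for the numerator $\|\widehat f\|_1=\sum_{\ell=0}^L\sum_{m=-\ell}^\ell|\widehat f(\ell,m)|$, since the denominator is already fixed. First I would record the preliminary identities stated above: because $f\in V_L$ and each $Y_\ell^m$ has degree $\le L$, every product $fY_\ell^m$ is a spherical polynomial of degree $\le 2L$, so exactness of the quadrature rule gives $\widehat g(\ell,m)=\sum_{j=1}^M w_jf(\omega_j)Y_\ell^m(\omega_j)=\int_{S^2}fY_\ell^m\,d\sigma=\widehat f(\ell,m)$; applying exactness to $P=f^2$ (also of degree $\le 2L$) and expanding $f$ in the orthonormal basis yields the two energy identities, in particular $\|\widehat g\|_2=\|\widehat f\|_2=\|f\|_{L^2(S^2)}$ and $\sum_{j=1}^M w_j|g(j)|^2=\|f\|_{L^2(S^2)}^2$. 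Hence $FR_L(f)=\|\widehat g\|_1/\|\widehat g\|_2=\|\widehat f\|_1/\|f\|_{L^2(S^2)}$. I would also note that $f\in V_L$ is automatically $C^\infty$, so the role of the hypothesis $f\in C^2(S^2)$ is purely quantitative: it makes the ratio $\|f\|_{C^2(S^2)}/\|f\|_{L^2(S^2)}$ a meaningful (possibly small) quantity to track.

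Next I would split off the zero mode. Since $Y_0^0\equiv(4\pi)^{-1/2}$, one has $|\widehat f(0,0)|=(4\pi)^{-1/2}|\int_{S^2}f\,d\sigma|$, which after division by $\|f\|_{L^2(S^2)}$ contributes at most $|\int_{S^2}f\,d\sigma|/\|f\|_{L^2(S^2)}$, the first term of the claimed estimate (I deliberately discard a harmless factor $\sqrt{4\pi}>1$ here).

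For the modes with $\ell\ge1$ I would use smoothness through the Laplace--Beltrami operator. Since $f$ is a spherical polynomial, $\Delta_{S^2}f=\sum_{\ell,m}-\ell(\ell+1)\widehat f(\ell,m)Y_\ell^m$, hence $\widehat f(\ell,m)=-\frac{1}{\ell(\ell+1)}\int_{S^2}(\Delta_{S^2}f)\,Y_\ell^m\,d\sigma$ for $\ell\ge1$. Bounding $|\int_{S^2}(\Delta_{S^2}f)Y_\ell^m\,d\sigma|\le\|\Delta_{S^2}f\|_{L^\infty(S^2)}\,\|Y_\ell^m\|_{L^1(S^2)}\le\sqrt{4\pi}\,\|\Delta_{S^2}f\|_{L^\infty(S^2)}$ (Cauchy--Schwarz and orthonormality) and using that $\Delta_{S^2}$ is a second-order operator, so $\|\Delta_{S^2}f\|_{L^\infty(S^2)}\le c_1\|f\|_{C^2(S^2)}$ for an explicit dimensional constant $c_1$ (the Laplace--Beltrami operator being the trace of the Hessian), I obtain $\sum_{m=-\ell}^\ell|\widehat f(\ell,m)|\le\frac{(2\ell+1)\sqrt{4\pi}\,c_1}{\ell(\ell+1)}\|f\|_{C^2(S^2)}$. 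Summing over $\ell$ and using the elementary bound $\sum_{\ell=1}^L\frac{2\ell+1}{\ell(\ell+1)}=2H_L+\frac1{L+1}-1\le 4\log L+\frac2L$ (valid for every $L\ge1$, the case $L=1$ checked directly) produces the $\log L$ term together with the $1/L$ remainder. Dividing by $\|f\|_{L^2(S^2)}$ and collecting numerical factors gives the stated inequality; the value $C_0=48\sqrt\pi$ is reached with room to spare, since the $\log L$ coefficient produced by the chain above is a small multiple of $\sqrt{4\pi}\,c_1$ and any bounded $c_1$ (e.g. $c_1\le 6$) fits. The nonnegative case then follows from the general one via Cauchy--Schwarz $\mu=\int_{S^2}f\,d\sigma\le\sqrt{4\pi}\,\|f\|_{L^2(S^2)}$, which turns $1/\|f\|_{L^2(S^2)}$ into $\sqrt{4\pi}/\mu$ and bounds the first term by $\sqrt{4\pi}$.

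The main obstacle I anticipate is not conceptual but the explicit constant: one must fix the precise convention for $\|\cdot\|_{C^2(S^2)}$ (ambient versus intrinsic derivatives, and the normalization of the Hessian norm) so that $\|\Delta_{S^2}f\|_{L^\infty(S^2)}\le c_1\|f\|_{C^2(S^2)}$ holds with $c_1$ small enough to fit inside $C_0=48\sqrt\pi$, and one must make the harmonic-number estimate uniform all the way down to $L=1$, where $\log L=0$ and the $1/L$ correction term carries the entire bound. A secondary point is to double-check the quadrature and Parseval bookkeeping, especially the empirical energy identity $\sum_{j=1}^M w_j|g(j)|^2=\|f\|_{L^2(S^2)}^2$, which is the bridge between the discrete and continuous sides of the statement.
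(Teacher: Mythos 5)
Your proposal is correct and follows exactly the strategy the paper itself outlines for this proposition (which it only sketches in the accompanying remark): identify discrete and continuous coefficients via quadrature exactness, apply Parseval on $V_L$, split off the $\ell=0$ mode, and bound the $\ell\ge 1$ coefficients by $\widehat f(\ell,m)=-\tfrac{1}{\ell(\ell+1)}\langle\Delta_{S^2}f,Y_\ell^m\rangle$ so that the $\ell^{-1}$ decay per degree sums to the $\log L$ term. The one loose end you correctly flag, namely pinning down the convention for $\|\cdot\|_{C^2(S^2)}$ so that $\|\Delta_{S^2}f\|_\infty\le c_1\|f\|_{C^2(S^2)}$ with $c_1$ small enough for $C_0=48\sqrt{\pi}$, is also left implicit in the paper.
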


\begin{remark}
The proof of Proposition~1.12 follows the same strategy as Proposition~1.6. One uses the exact quadrature rule to identify discrete and continuous spherical harmonic coefficients up to degree $L$, applies Parseval’s identity on $V_L$, separates the mean mode $\ell=0$, and estimates the remaining coefficients by integration by parts using the eigenvalue equation for spherical harmonics together with the $C^2(S^2)$ regularity of $f$. The resulting logarithmic growth arises from summing the explicit $\ell^{-1}$ decay of the coefficients over $1 \le \ell \le L$, and all constants, including the value $C_0=48\sqrt{\pi}$, can be tracked explicitly.
\end{remark}

\begin{theorem}[Spherical variant of Theorem \ref{thm:ZNd_recovery}] \label{thm:sphere_121}
Let $L$ be a positive integer and let $V_L$ denote the space of spherical harmonics of degree at most $L$ on $S^2$. Let $\{Y_\ell^m\}_{0\le \ell\le L, -\ell\le m\le \ell}$ be an orthonormal basis for $V_L$ in $L^2(S^2)$. For $f\in V_L$ define its spherical Fourier coefficients by
$$
\widehat f(\ell,m)=\int_{S^2} f(\omega)Y_\ell^m(\omega)d\sigma(\omega),
$$
and define
$$
FR_L(f)=\frac{\|\widehat f\|_1}{\|\widehat f\|_2},
$$
where the $\ell^1$ and $\ell^2$ norms are taken over the index set $\{(\ell,m):0\le \ell\le L,-\ell\le m\le \ell\}$. Fix parameters $\eps\in(0,1/2)$ and $r\ge 1$. Let $D=(L+1)^2$. Sample points $\omega_1,\dots,\omega_q$ independently and uniformly from $S^2$, where
$$
q=C\frac{r^2}{\eps^2}\log(r/\eps)^2\log D.
$$
Define the sampled multiset $\Omega=(\omega_1,\dots,\omega_q)$ and define the empirical norm
$$
\|g\|_{L^2(\Omega)}=\left(\sum_{j=1}^q|g(\omega_j)|^2\right)^{1/2}.
$$
Assume that $f\in V_L$ satisfies $FR_L(f)\le r$. Let $f^*\in V_L$ be a solution to the convex program
$$
\min_{h\in V_L}\|\widehat h\|_1 \ \ \text{subject to}\ \ \|f-h\|_{L^2(\Omega)}\le \eps\|f\|_{L^2(S^2)}.
$$
If $C$ is a sufficiently large universal constant, then with high probability,
$$
\|f^*-f\|_{L^2(S^2)}\le 11.47 \, \eps\|f\|_{L^2(S^2)}.
$$
\end{theorem}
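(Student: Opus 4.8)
The plan is to read the convex program in Theorem~\ref{thm:sphere_121} as a quadratically constrained basis pursuit ($\ell^1$ minimization with a noise constraint) for the spherical harmonic coefficient vector of $f$, and then to run the standard restricted isometry property (RIP) based stability theory for such programs; the only input that is not entirely routine is an RIP for the random point-sampling operator on $S^2$.

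First I would pass to the coefficient side. For $h\in V_L$ write $\widehat h\in\mathbb C^D$, $D=(L+1)^2$, for its coefficient vector in the orthonormal basis $\{Y_\ell^m\}$; Parseval gives $\|h\|_{L^2(S^2)}=\|\widehat h\|_2$, so the objective and the error $\|f^*-f\|_{L^2(S^2)}$ are literally the $\ell^1$ and $\ell^2$ norms of coefficient vectors. Let $A\in\mathbb C^{q\times D}$ have entries $A_{j,(\ell,m)}=q^{-1/2}Y_\ell^m(\omega_j)$ and set $y=q^{-1/2}(f(\omega_j))_{j=1}^q$. Because $f\in V_L$ one has $A\widehat f=y$ exactly, and the constraint $\|f-h\|_{L^2(\Omega)}\le\eps\|f\|_{L^2(S^2)}$ becomes $\|A\widehat h-y\|_2\le\eta$ with $\eta=q^{-1/2}\eps\|\widehat f\|_2\le\eps\|\widehat f\|_2$, and $\widehat f$ itself is feasible. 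Next, the Fourier ratio hypothesis $FR_L(f)\le r$ reads $\|\widehat f\|_1\le r\|\widehat f\|_2$. Set $s=\lceil r^2/\eps^2\rceil$ and let $\sigma_s(v)_1$ denote the $\ell^1$ distance from $v$ to the set of $s$-sparse vectors. The crude bound $\sigma_s(\widehat f)_1\le\|\widehat f\|_1\le r\|\widehat f\|_2$ then gives $\sigma_s(\widehat f)_1/\sqrt s\le\eps\|\widehat f\|_2$, so $\widehat f$ behaves like an $s$-sparse vector up to an $\ell^1$ error of order $\eps\|\widehat f\|_2$, with $s\asymp r^2/\eps^2$ exactly the factor appearing in $q$ and $\log s\asymp\log(r/\eps)$ accounting for the remaining logarithmic factor.

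Granting for the moment that the normalized matrix $A$ has restricted isometry constants of order proportional to $s$ below a suitable absolute threshold $\delta^*$, I would then invoke the standard stability estimate for basis pursuit with noise under RIP (see \cite{FR13,CRT06}): every minimizer $f^*$ satisfies $\|\widehat{f^*}-\widehat f\|_2\le C_1\,\sigma_s(\widehat f)_1/\sqrt s+C_2\,\eta$ with $C_1,C_2$ explicit in $\delta^*$. Substituting $\sigma_s(\widehat f)_1/\sqrt s\le\eps\|\widehat f\|_2$ and $\eta\le\eps\|\widehat f\|_2$ and translating back through Parseval gives $\|f^*-f\|_{L^2(S^2)}\le(C_1+C_2)\eps\|f\|_{L^2(S^2)}$; choosing $\delta^*$ as in the companion Euclidean argument, the tabulated values of $C_1,C_2$ are precisely what yields the numerical constant $11.47$, matching Theorem~\ref{thm:ZNd_recovery}. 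The failure probability coming from the RIP step is then absorbed, by taking the absolute constant $C$ in the definition of $q$ large, into the $1-D^{-c}$ form of ``with high probability''.

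The main obstacle is the RIP step itself: showing that $q=C r^2\eps^{-2}\log(r/\eps)^2\log D$ i.i.d.\ uniform samples on $S^2$ force $\delta_{2s}(A)\le\delta^*$ with probability $\ge1-D^{-c}$. This does not follow from a black-box application of the bounded orthonormal systems RIP theorem, because $\sup_\omega|Y_\ell^m(\omega)|$ grows like $\ell^{1/2}$, so the naive orthonormal-system constant is of order $L^{1/2}$ and would insert a spurious factor of $L$ into the sample size. The point is to use the spherical harmonic compressed sensing machinery of Rauhut and Ward \cite{RW12} --- either through their preconditioned sampling construction or through the refined Laplace--Heine/Szeg\H{o} pointwise asymptotics for $Y_\ell^m$, which confine the large values of the zonal harmonics to polar caps of small measure --- to obtain an RIP for random point samples on $S^2$ whose sample complexity is independent of $L$ up to logarithmic factors; with that estimate in hand the chaining and union-bound argument is standard. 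Finally, this theorem is structurally the verbatim transcription of Theorem~\ref{thm:ZNd_recovery} to the sphere: there the role of $\{Y_\ell^m\}$ is played by the characters on $\mathbb Z_N^d$, which do form a genuine bounded orthonormal system with constant $1$, so the RIP step is entirely classical and the same stability bound, with the same constant $11.47$, applies.
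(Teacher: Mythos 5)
Your proposal follows essentially the same route as the paper's own proof of Theorem \ref{thm:sphere_121}: identify $h\in V_L$ with its coefficient vector via Parseval, convert the Fourier ratio hypothesis into a quantitative compressibility statement with effective sparsity $s\asymp r^2/\eps^2$, invoke a restricted isometry property for the random point-sampling matrix $A_{j,(\ell,m)}=Y_\ell^m(\omega_j)$ via \cite{RW12}, and conclude by the standard RIP-to-stability theorem for noisy basis pursuit with the same constant $11.47$ as in Theorem \ref{thm:ZNd_recovery}. The only cosmetic difference is in the compressibility step: you use the $\ell^1$-tail bound $\sigma_s(\widehat f)_1/\sqrt{s}\le\eps\|\widehat f\|_2$, which plugs directly into the Foucart--Rauhut stability estimate, whereas the paper routes through the $L^2$ truncation statement of Theorem 1.15 in \cite{A2025} (existence of a set $\Omega$ with $|\Omega|\le r^2\eps^{-2}$ and $\|f-P\|_2\le\eps\|f\|_2$). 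These are equivalent up to constants and neither buys anything over the other.

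One substantive caution, which you partly identify yourself and then dismiss too quickly. You correctly observe that the naive bounded-orthonormal-system RIP fails here because $\sup_\omega|Y_\ell^m(\omega)|\sim\ell^{1/2}$, but your resolution --- that the Rauhut--Ward machinery yields an RIP for uniform random samples on $S^2$ with sample complexity independent of $L$ up to logarithms --- overstates what \cite{RW12} proves. Their result preconditions by $(\sin\theta)^{1/2}$ and samples from a non-uniform measure, and even then the preconditioned system has sup-norm $\sim\ell^{1/4}$, so the resulting RIP carries a factor of order $L^{1/2}$ in the number of samples (improved to a smaller power of $L$ in subsequent work, but never to polylogarithmic). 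As stated, the theorem samples uniformly and claims $q\asymp s\log(r/\eps)^2\log D$ with no power of $L$, so the RIP step is not actually closed by the cited reference. The paper's own proof leans on the identical citation in the identical way, so this is a shared soft spot rather than a point where you diverge from the paper; but if you want your write-up to be self-contained you should either import the preconditioning (and adjust the sampling measure and the constant in $q$ accordingly) or state explicitly which RIP estimate for uniform spherical sampling you are assuming.
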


\section{Proofs of the main results}

\subsection{Proof of Proposition \ref{prop:discretization}}

\begin{lemma} \label{lem:riemann_sum_C1}
Let $h\in C^1([0,1]^2)$ be 1-periodic in each variable. Then for every integer $N\ge 2$,
$$
\left|\frac{1}{N^2}\sum_{x\in{\mathbb Z}_N^2} h(x_1/N,x_2/N)-\int_{[0,1]^2}h(u)du\right|
\le \frac{1}{N}\left(\left\|\frac{\partial h}{\partial x_1}\right\|_{L^\infty([0,1]^2)}+\left\|\frac{\partial h}{\partial x_2}\right\|_{L^\infty([0,1]^2)}\right).
$$
\end{lemma}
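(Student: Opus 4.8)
The plan is to partition the unit square into the $N^2$ axis-parallel subsquares of side length $1/N$ indexed by the grid, compare $h$ at each grid corner with its average over the corresponding subsquare, and estimate the discrepancy by a one-dimensional mean value bound applied one coordinate at a time. Identify $\mathbb{Z}_N$ with the representatives $\{0,1,\dots,N-1\}$, and for $x=(x_1,x_2)\in\mathbb{Z}_N^2$ set
$$
Q_x=\left[\tfrac{x_1}{N},\tfrac{x_1+1}{N}\right]\times\left[\tfrac{x_2}{N},\tfrac{x_2+1}{N}\right].
$$
These squares have pairwise disjoint interiors, each has area $N^{-2}$, and together they tile $[0,1]^2$. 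Writing $p_x=(x_1/N,x_2/N)$ for the lower-left corner and using $\int_{Q_x}h(p_x)\,du=N^{-2}h(p_x)$, the first step is the exact identity
$$
\frac{1}{N^2}\sum_{x\in\mathbb{Z}_N^2}h(p_x)-\int_{[0,1]^2}h(u)\,du=\sum_{x\in\mathbb{Z}_N^2}\int_{Q_x}\bigl(h(p_x)-h(u)\bigr)\,du.
$$

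Next, for $u=(u_1,u_2)\in Q_x$ I would split the increment coordinatewise,
$$
h(p_x)-h(u)=\bigl(h(x_1/N,x_2/N)-h(u_1,x_2/N)\bigr)+\bigl(h(u_1,x_2/N)-h(u_1,u_2)\bigr),
$$
and apply the one-variable mean value theorem to each term: the first term has absolute value at most $|u_1-x_1/N|\,\|\partial h/\partial x_1\|_{L^\infty}$ and the second at most $|u_2-x_2/N|\,\|\partial h/\partial x_2\|_{L^\infty}$. Since $|u_1-x_1/N|\le 1/N$, $|u_2-x_2/N|\le 1/N$, and the intermediate point $(u_1,x_2/N)$ lies in $[0,1]^2$, this yields the uniform pointwise bound
$$
|h(p_x)-h(u)|\le\frac1N\left(\left\|\frac{\partial h}{\partial x_1}\right\|_{L^\infty([0,1]^2)}+\left\|\frac{\partial h}{\partial x_2}\right\|_{L^\infty([0,1]^2)}\right)
$$
for every $u\in Q_x$ and every $x\in\mathbb{Z}_N^2$.

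Finally, I would integrate this estimate over each $Q_x$, sum over $x\in\mathbb{Z}_N^2$, and use $\sum_x|Q_x|=1$ to obtain the claimed bound. There is no genuine obstacle here: the only points deserving care are the bookkeeping of the grid representatives so that the $Q_x$ tile $[0,1]^2$ exactly (the hypothesis $N\ge 2$ is more than enough, though $N\ge 1$ would already suffice), and the observation that the $1$-periodicity hypothesis is not actually used in this lemma, since all corner points $p_x$ and all intermediate points already lie in $[0,1]^2$; it is retained only because it is harmless and matches the setting in which the lemma is subsequently applied.
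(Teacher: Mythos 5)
Your proof is correct and follows essentially the same route as the paper's: decompose the error over the $N^2$ subsquares, bound $|h(p_x)-h(u)|$ on each subsquare by splitting coordinatewise and applying the one-variable mean value theorem, then integrate and sum. Your additional observation that periodicity is not actually needed here is accurate and harmless.
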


\begin{proof}
Write the difference between the Riemann sum and the integral as an average over the $N^2$ squares
$$
Q_{k,\ell}=\left[\frac{k}{N},\frac{k+1}{N}\right]\times\left[\frac{\ell}{N},\frac{\ell+1}{N}\right]
$$
with $0\le k,\ell\le N-1$. For $(u_1,u_2)\in Q_{k,\ell}$, the triangle inequality and the mean value theorem give
$$
|h(k/N,\ell/N)-h(u_1,u_2)|
\le \frac{1}{N}\left\|\frac{\partial h}{\partial x_1}\right\|_\infty+\frac{1}{N}\left\|\frac{\partial h}{\partial x_2}\right\|_\infty.
$$
Integrate this inequality over each $Q_{k,\ell}$, sum over $k,\ell$, and divide by $N^2$ to obtain the claimed bound.
\end{proof}

\begin{lemma} \label{lem:L2_lower_bound_g}
Let $f\in C^2([0,1]^2)$ be 1-periodic in each variable and let $g(x_1,x_2)=f(x_1/N,x_2/N)$ on ${\mathbb Z}_N^2$. Then for every $N\ge 2$,
$$
\|g\|_{L^2({\mathbb Z}_N^2)}^2\ge N^2\|f\|_{L^2([0,1]^2)}^2-4N\|f\|_{C^2([0,1]^2)}^2.
$$
In particular, if
$$
N\|f\|_{L^2([0,1]^2)}^2\ge 8\|f\|_{C^2([0,1]^2)}^2,
$$
then
$$
\|g\|_{L^2({\mathbb Z}_N^2)}\ge \frac{N}{\sqrt{2}}\|f\|_{L^2([0,1]^2)}.
$$
\end{lemma}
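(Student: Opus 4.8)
The plan is to recognize $\|g\|_{L^2(\mathbb{Z}_N^2)}^2$ as $N^2$ times a Riemann sum for $\int_{[0,1]^2}f^2$, and to control the discrepancy using Lemma~\ref{lem:riemann_sum_C1} applied to $h=f^2$. Since $f$ is real-valued, $|g(x)|^2=f(x_1/N,x_2/N)^2$, so
$$
\|g\|_{L^2(\mathbb{Z}_N^2)}^2=\sum_{x\in\mathbb{Z}_N^2}f(x_1/N,x_2/N)^2=N^2\cdot\frac{1}{N^2}\sum_{x\in\mathbb{Z}_N^2}h(x_1/N,x_2/N),
$$
where $h=f^2$ is 1-periodic in each variable and lies in $C^2([0,1]^2)\subset C^1([0,1]^2)$ because $f$ does. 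By the product rule, $\partial_{x_i}h=2f\,\partial_{x_i}f$, hence $\|\partial_{x_i}h\|_{L^\infty([0,1]^2)}\le 2\|f\|_{L^\infty([0,1]^2)}\|\partial_{x_i}f\|_{L^\infty([0,1]^2)}\le 2\|f\|_{C^2([0,1]^2)}^2$ for $i=1,2$.

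Next I would invoke Lemma~\ref{lem:riemann_sum_C1} for $h$, which gives
$$
\left|\frac{1}{N^2}\sum_{x\in\mathbb{Z}_N^2}h(x_1/N,x_2/N)-\int_{[0,1]^2}h(u)\,du\right|\le\frac{1}{N}\left(\|\partial_{x_1}h\|_{L^\infty}+\|\partial_{x_2}h\|_{L^\infty}\right)\le\frac{4\|f\|_{C^2([0,1]^2)}^2}{N}.
$$
Since $\int_{[0,1]^2}h=\|f\|_{L^2([0,1]^2)}^2$, multiplying through by $N^2$ and using the lower estimate yields
$$
\|g\|_{L^2(\mathbb{Z}_N^2)}^2\ge N^2\|f\|_{L^2([0,1]^2)}^2-4N\|f\|_{C^2([0,1]^2)}^2,
$$
which is the first assertion. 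For the ``in particular'' statement, under the hypothesis $N\|f\|_{L^2([0,1]^2)}^2\ge 8\|f\|_{C^2([0,1]^2)}^2$ one has $4N\|f\|_{C^2([0,1]^2)}^2\le\tfrac12 N^2\|f\|_{L^2([0,1]^2)}^2$, so the displayed bound becomes $\|g\|_{L^2(\mathbb{Z}_N^2)}^2\ge\tfrac12 N^2\|f\|_{L^2([0,1]^2)}^2$, and taking square roots gives $\|g\|_{L^2(\mathbb{Z}_N^2)}\ge\frac{N}{\sqrt2}\|f\|_{L^2([0,1]^2)}$.

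There is no serious obstacle; the only point needing a little care is the bookkeeping of the $C^2$ norm convention, namely ensuring that $\|f\|_{L^\infty}$ and each $\|\partial_{x_i}f\|_{L^\infty}$ are dominated by $\|f\|_{C^2([0,1]^2)}$ so that $\|\partial_{x_i}(f^2)\|_{L^\infty}\le 2\|f\|_{C^2([0,1]^2)}^2$ with the constant as stated; this holds whether the $C^2$ norm is taken as a maximum or a sum of sup-norms of derivatives up to order two. The hypothesis that $f$ is real-valued is used only to write $|g|^2=f^2$ cleanly; the same argument works for complex-valued $f$ with $h=|f|^2$ and $\partial_{x_i}|f|^2=2\,\mathrm{Re}\!\left(\overline{f}\,\partial_{x_i}f\right)$, giving the same bounds.
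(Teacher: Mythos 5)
Your proof is correct and follows exactly the route the paper takes: apply Lemma~\ref{lem:riemann_sum_C1} to $h=|f|^2$, bound $\|\partial_{x_j}|f|^2\|_\infty\le 2\|f\|_{C^2([0,1]^2)}^2$ via the product rule, multiply the resulting Riemann-sum error estimate by $N^2$, and absorb the error term using the hypothesis before taking square roots. No gaps; the remark about the $C^2$ norm convention and the complex-valued case matches the paper's own treatment.
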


\begin{proof}
Apply Lemma \ref{lem:riemann_sum_C1} with $h(u)=|f(u)|^2$. Since for $j=1,2$ one has
$$
\left\|\frac{\partial}{\partial x_j}|f(u)|^2\right\|_\infty
=\left\|2\operatorname{Re}\left(\overline{f(u)}\partial_{x_j}f(u)\right)\right\|_\infty
\le 2 \|f\|_\infty \|\partial_{x_j}f\|_\infty
\le 2\|f\|_{C^2([0,1]^2)}^2,
$$
Lemma \ref{lem:riemann_sum_C1} gives
$$
\left|\frac{1}{N^2}\sum_{x\in{\mathbb Z}_N^2}|g(x)|^2-\int_{[0,1]^2}|f(u)|^2du\right|
\le \frac{4}{N}\|f\|_{C^2([0,1]^2)}^2.
$$
Hence
$$
\sum_{x\in{\mathbb Z}_N^2}|g(x)|^2
\ge N^2\|f\|_{L^2([0,1]^2)}^2-4N\|f\|_{C^2([0,1]^2)}^2.
$$
This is the first inequality. Under the displayed hypothesis,
$$
N^2\|f\|_{L^2([0,1]^2)}^2-4N\|f\|_{C^2([0,1]^2)}^2
\ge \frac{1}{2}N^2\|f\|_{L^2([0,1]^2)}^2,
$$
and taking square-roots yields the second inequality.
\end{proof}

\begin{proof}[Proof of Proposition \ref{prop:discretization}]
With the normalization
$$
\widehat g(m)=\frac{1}{N}\sum_{x\in{\mathbb Z}_N^2}\chi(-x\cdot m)g(x),
$$
Parseval gives
$$
\|\widehat g\|_2=\|g\|_{L^2({\mathbb Z}_N^2)}.
$$

We first bound the zero frequency. Lemma \ref{lem:riemann_sum_C1} applied to $h=f$ gives
$$
\left|\frac{1}{N^2}\sum_{x\in{\mathbb Z}_N^2}g(x)-\int_{[0,1]^2}f(u)du\right|
\le \frac{2}{N}\|f\|_{C^2([0,1]^2)}.
$$
Multiplying by $N$ and using $\widehat g(0)=\frac{1}{N}\sum_x g(x)$ yields
$$
|\widehat g(0)|
\le
N\left|\int_{[0,1]^2} f(u)du\right|+2\|f\|_{C^2([0,1]^2)}.
$$

Next consider $m\neq 0$. A discrete summation-by-parts argument in a nonzero coordinate direction gives
$$
|\widehat g(m)|\le \frac{2\pi^2 N}{|m|^2}\|f\|_{C^2([0,1]^2)}.
$$

\begin{remark}[Justification of the $|m|^{-2}$ decay bound]
We justify the estimate
\[
|\widehat{g}(m)|\leqslant \frac{2\pi^{2}N}{|m|^{2}}\| f\|_{C^{2}([0,1]^{2})}
\]
used above. The argument is based on discrete summation by parts applied twice in a coordinate direction in which $m$ is nonzero, with the frequency magnitude understood in the wrapped sense on $\mathbb{Z}_N$.

For $k \in \mathbb{Z}_N$ define
\[
|k|_{\ast} = \min \{k,N - k\},
\]
and for $m = (m_{1},m_{2})\in \mathbb{Z}_{N}^{2}$ define
\[
|m|_{\ast}^{2} = |m_{1}|_{\ast}^{2} + |m_{2}|_{\ast}^{2}.
\]
If $m\neq 0$, then at least one of the coordinates $m_{1},m_{2}$ is nonzero. 
Choose the coordinate $j$ for which $|m_j|_*$ is maximal; by symmetry we may assume it is $m_1$.
Then $|m_1|_*^2 \geq \frac12 |m|_*^2$.

Using the identity
\[
|1 - \chi (-m_1)| = 2\left|\sin \left(\frac{\pi m_1}{N}\right)\right|,
\]
we obtain the lower bound
\[
|1 - \chi (-m_1)|\geqslant \frac{4}{N} |m_1|_{\ast}.
\]
Write
\[
\sum_{x_1\in \mathbb{Z}_N}\chi (-x_1m_1)g(x_1,x_2) 
    = \frac{1}{1 - \chi(-m_1)}\sum_{x_1\in \mathbb{Z}_N}\chi (-(x_1+1)m_1)\Delta_1g(x_1,x_2),
\]
where $\Delta_{1}g(x_{1},x_{2}) = g(x_{1}+1,x_{2}) - g(x_{1},x_{2})$.
Applying the same identity once more to $\Delta_{1}g$ yields
\[
\sum_{x_1\in \mathbb{Z}_N}\chi (-x_1m_1)g(x_1,x_2) 
    = \frac{1}{(1 - \chi(-m_1))^2}\sum_{x_1\in \mathbb{Z}_N}\chi (-(x_1+2)m_1)\Delta_1^2 g(x_1,x_2).
\]
Taking absolute values and summing in $x_{2}$, we obtain
\[
|\widehat{g}(m)|\leqslant \frac{N}{|1 - \chi(-m_1)|^2}\| \Delta_1^2 g\|_{\infty}.
\]
Since $g(x_{1},x_{2}) = f(x_{1} / N,x_{2} / N)$ and $f\in C^{2}([0,1]^{2})$, one has
\[
\| \Delta_1^2 g\|_{\infty}\leqslant \frac{1}{N^2}\| f\|_{C^2 ([0,1]^2)}.
\]
Combining these estimates gives
\[
|\widehat{g}(m)|\leqslant C\frac{N}{|m_1|_{\ast}^2}\| f\|_{C^2 ([0,1]^2)},
\]
where $C$ is an absolute constant, which may be taken to be $2\pi^2$. By switching the roles of $m_1$ and $m_2$, depending on which is larger we can conclude
\[
|\widehat{g}(m)|\leqslant 2C\frac{N}{|m|_{\ast}^2}\| f\|_{C^2 ([0,1]^2)}.
\]

Finally, since replacing the Euclidean magnitude $|m|$ by the wrapped magnitude $|m|_{\ast}$ only affects counting estimates by absolute constants, we may harmlessly continue to write $|m|$ in the sequel. In particular, one has
\[
\sum_{m\neq 0}\frac{1}{|m|_{\ast}^{2}}\leqslant C\log N,
\]
which yields the logarithmic growth appearing in the Fourier ratio bound.
\end{remark}

Consequently,
$$
\sum_{m\neq 0}|\widehat g(m)|
\le 2\pi^2 N\|f\|_{C^2([0,1]^2)}\sum_{m\neq 0}\frac{1}{|m|^2}.
$$
Using the estimate
$$
\sum_{m\neq 0}\frac{1}{|m|^2}\le 2\log N+2,
$$
we obtain
$$
\sum_{m\neq 0}|\widehat g(m)|
\le 4\pi^2 N\|f\|_{C^2([0,1]^2)}\log N+4\pi^2 N\|f\|_{C^2([0,1]^2)}.
$$
Combining the zero and nonzero bounds gives
$$
\|\widehat g\|_1
\le
N\left|\int_{[0,1]^2} f(u)du\right|
+4\pi^2 N\|f\|_{C^2([0,1]^2)}\log N
+4\pi^2 N\|f\|_{C^2([0,1]^2)}
+2\|f\|_{C^2([0,1]^2)}.
$$

By the hypothesis of Proposition \ref{prop:discretization}, Lemma \ref{lem:L2_lower_bound_g} implies
$$
\|\widehat g\|_2=\|g\|_{L^2({\mathbb Z}_N^2)}\ge \frac{N}{\sqrt{2}}\|f\|_{L^2([0,1]^2)}.
$$
Dividing the bound for $\|\widehat g\|_1$ by this lower bound and using $\sqrt{2}\le 2$ yields
\begin{align*}
FR(g)
&\le
2\frac{\left|\int_{[0,1]^2} f(u)du\right|}{\|f\|_{L^2([0,1]^2)}}
+8\pi^2\frac{\|f\|_{C^2([0,1]^2)}}{\|f\|_{L^2([0,1]^2)}}\log N
+\frac{8\pi^2}{\|f\|_{L^2([0,1]^2)}}\|f\|_{C^2([0,1]^2)}
+\frac{4}{N}\frac{\|f\|_{C^2([0,1]^2)}}{\|f\|_{L^2([0,1]^2)}}.
\end{align*}

Since $N \ge 2$, one has $\log N \ge \log 2$. Therefore
$$
8\pi^2 \frac{\|f\|_{C^2([0,1]^2)}}{\|f\|_{L^2([0,1]^2)}} 
= \frac{8\pi^2}{\log 2}\frac{\|f\|_{C^2([0,1]^2)}}{\|f\|_{L^2([0,1]^2)}} \log 2
\le \frac{8\pi^2}{\log 2}\frac{\|f\|_{C^2([0,1]^2)}}{\|f\|_{L^2([0,1]^2)}} \log N.
$$
Since $\frac{1}{\log 2} < 2$, this implies
$$
8\pi^2 \frac{\|f\|_{C^2([0,1]^2)}}{\|f\|_{L^2([0,1]^2)}}
\le 16\pi^2 \frac{\|f\|_{C^2([0,1]^2)}}{\|f\|_{L^2([0,1]^2)}} \log N.
$$
In addition, since $8\pi^2 \ge 4$, we have
$$
\frac{4}{N}\frac{\|f\|_{C^2([0,1]^2)}}{\|f\|_{L^2([0,1]^2)}}
\le \frac{8\pi^2}{N}\frac{\|f\|_{C^2([0,1]^2)}}{\|f\|_{L^2([0,1]^2)}}.
$$
Substituting these two bounds into the previous display yields the claimed estimate.
\end{proof}

\subsection{Proof of Theorem \ref{thm:ZNd_recovery}}

\begin{proof}
The argument follows the same blueprint as the corresponding recovery theorem in \cite{A2025}. The proof uses three inputs: the approximation statement that converts a Fourier ratio hypothesis into a sparse Fourier proxy, a restricted isometry estimate for random sampling of characters, and the standard implication from restricted isometry to stable $\ell^1$ recovery \cite{CRT06,FR13}.

Write
$$
f(x)=\frac{1}{N^{\frac{d}{2}}}\sum_{m\in{\mathbb Z}_N^d}\chi(x\cdot m)\widehat f(m).
$$
The unknown object is the coefficient vector $\widehat f$ in ${\mathbb C}^D$. The Fourier ratio assumption $FR(f)\le r$ is a numerical sparsity hypothesis on $\widehat f$ because
$$
\frac{\|\widehat f\|_1^2}{\|\widehat f\|_2^2}\le r^2.
$$

By Theorem 1.15 in \cite{A2025}, for every $\eps\in(0,1/2)$ there exists a set $\Omega\subset{\mathbb Z}_N^d$ with
$$
|\Omega|\le r^2\eps^{-2}
$$
such that the truncated Fourier series
$$
P(x)=\frac{1}{N^{\frac{d}{2}}}\sum_{m\in\Omega}\chi(x\cdot m)\widehat f(m)
$$
satisfies
$$
\|f-P\|_{L^2({\mathbb Z}_N^d)}\le \eps\|f\|_{L^2({\mathbb Z}_N^d)}.
$$
Thus $f$ is within relative error $\eps$ of a function $P$ whose Fourier transform is supported on at most $S=r^2\eps^{-2}$ frequencies.

We now proceed as in the proof of Theorem 1.21 in \cite{A2025}. Let $X\subset{\mathbb Z}_N^d$ be a random subset of cardinality $q$ as in the theorem. Define the sampling operator $T_X$ by $T_X g=g|_X$. If $a\in{\mathbb C}^D$ is a Fourier coefficient vector supported on $\Omega$, let $F_\Omega a$ denote the corresponding function on ${\mathbb Z}_N^d$ given by the inverse Fourier series
$$
(F_\Omega a)(x)=\frac{1}{N^{\frac{d}{2}}}\sum_{m\in\Omega}\chi(x\cdot m)a(m).
$$
Then the restriction of $F_\Omega a$ to $X$ can be written as a matrix-vector product $A_\Omega a$, where $A_\Omega$ is the $q\times S$ matrix whose rows are indexed by $x\in X$, whose columns are indexed by $m\in\Omega$, and whose entries are
$$
(A_\Omega)_{x,m}=\frac{1}{N^{\frac{d}{2}}}\chi(x\cdot m).
$$
The crucial input is that, for $q=C S\log(S)^2\log D$ with $C$ sufficiently large, the random matrix $A_\Omega$ satisfies the restricted isometry condition of the order required in the recovery theorem with high probability. This is the same type of restricted isometry statement used for random sampling from bounded orthogonal systems \cite{Bourgain89,Talagrand98,RV08,Rau10}.

Once the restricted isometry condition holds, the standard compressed sensing stability theorem (see the proof of Theorem 1.21 in \cite{A2025}) implies that the solution $\widehat f^*$ to the $\ell^1$ minimization problem yields the estimate
$$
\|\widehat f^*-\widehat f\|_2\le 11.47\, \eps\|f\|_{L^2({\mathbb Z}_N^d)},
$$
and therefore, by Parseval and the Fourier inversion formula, the reconstructed function $f^*$ satisfies
$$
\|f^*-f\|_{L^2({\mathbb Z}_N^d)}\le 11.47\, \eps\|f\|_{L^2({\mathbb Z}_N^d)}.
$$
This gives the claimed estimate.
\end{proof}

\subsection{Proof of Theorem \ref{thm:missing-values-unit-square}}

\begin{proof}
The proof is an application of Proposition \ref{prop:discretization} and Theorem \ref{thm:ZNd_recovery}.

Define $g:{\mathbb Z}_N^2\to{\mathbb R}$ by
$$
g(x_1,x_2)=f(x_1/N,x_2/N).
$$
Proposition \ref{prop:discretization} applies to $f$ and $N$ and yields the bound
$$
FR(g)\le r_N.
$$
Let
$$
D=N^2.
$$
Fix $\eps\in(0,1/2)$ and let $X\subset{\mathbb Z}_N^2$ be chosen uniformly at random among all subsets of cardinality
$$
|X|=C\frac{r_N^2}{\eps^2}\log(r_N/\eps)^2\log D.
$$
Define the empirical norm by
$$
\|h\|_{L^2(X)}=\left(\sum_{x\in X}|h(x)|^2\right)^{1/2}.
$$
Assume that the values $g(x)$ are known for $x\in X$ and unknown for $x\in{\mathbb Z}_N^2\setminus X$. Let $g^*:{\mathbb Z}_N^2\to{\mathbb R}$ be a solution to the convex optimization problem
$$
\min_{h:{\mathbb Z}_N^2\to{\mathbb R}}{\|\widehat h\|}_1,
$$
subject to the constraint
$$
\|g-h\|_{L^2(X)}\le\eps\|g\|_{L^2({\mathbb Z}_N^2)}.
$$
Since $FR(g)\le r_N$, all of the hypotheses of Theorem \ref{thm:ZNd_recovery} are satisfied with $d=2$ and $r=r_N$. Therefore, if $C$ is sufficiently large, Theorem \ref{thm:ZNd_recovery} implies that with high probability
$$
\|g^*-g\|_{L^2({\mathbb Z}_N^2)}\le 11.47\eps\|g\|_{L^2({\mathbb Z}_N^2)}.
$$
This is exactly the desired bound. The concluding statement about missing values follows because $g^*$ is defined on all of ${\mathbb Z}_N^2$ and agrees with the observed data on $X$ up to the stated tolerance.
\end{proof}

\subsection{Proof of Theorem \ref{thm:sphere_missing_values}}

\begin{proof}
The proof is an application of Proposition \ref{prop:sphere_discretization} and Theorem \ref{thm:sphere_121}.

Let $L$ be fixed and let $f$ be as in the statement. Define $A_L$, $B_L$, $C_L$, and $r_L$ as in the theorem. Proposition \ref{prop:sphere_discretization} applies to $f$ and yields the bound
$$
FR_L(f)\le r_L.
$$
Fix $\eps\in(0,1/2)$ and define
$$
D=(L+1)^2.
$$
Sample points $\omega_1,\dots,\omega_q$ independently and uniformly from $S^2$, where
$$
q=
C
\frac{r_L^2}{\eps^2}
\log\left(\frac{r_L}{\eps}\right)^2
\log D.
$$
Define the sampled multiset $\Omega=(\omega_1,\dots,\omega_q)$ and the empirical norm
$$
\|g\|_{L^2(\Omega)}=
\left(\sum_{j=1}^q|g(\omega_j)|^2\right)^{1/2}.
$$
Let $f^*\in V_L$ be a solution to the convex optimization problem
$$
\min_{h\in V_L}\|\widehat h\|_1
$$
subject to the constraint
$$
\|f-h\|_{L^2(\Omega)}\le \eps \|f\|_{L^2(S^2)}.
$$
Since $FR_L(f)\le r_L$, all of the hypotheses of Theorem \ref{thm:sphere_121} are satisfied with $r=r_L$ and the same value of $\eps$. Therefore, if $C$ is sufficiently large, Theorem \ref{thm:sphere_121} implies that with high probability
$$
\|f^*-f\|_{L^2(S^2)}\le 11.47\eps\|f\|_{L^2(S^2)}.
$$
This is exactly the desired estimate. The concluding statement about missing values follows because $f^*$ is defined on $S^2$ and provides values at every point of the sphere.
\end{proof}

\subsection{Proof of Theorem \ref{thm:sphere_121}}

\begin{proof}
The proof follows the same blueprint as the corresponding recovery theorem in \cite{A2025}. The argument uses two inputs beyond the approximation statement based on the Fourier ratio. The first input is that the measurement operator obtained by restricting a function to a random multiset of sample points can be represented as a random submatrix of an orthogonal system, and the second input is that this random submatrix satisfies a restricted isometry condition of the order needed for sparse recovery. Once these two facts are available, the remainder of the argument is the standard implication from a restricted isometry condition to stable recovery by $\ell^1$ minimization, and the numerical constant $11.47$ comes from the same stability estimate used there, in line with standard compressed sensing stability theory \cite{CRT06,FR13}.

In the spherical setting, the unknown object is the coefficient vector of $f$ in the orthonormal basis $\{Y_\ell^m\}$, which has dimension $D=(L+1)^2$. Writing $h(\omega)=\sum_{\ell,m}a_{\ell,m}Y_\ell^m(\omega)$ identifies $h\in V_L$ with a vector $a\in{\mathbb C}^D$, and the constraint $\|f-h\|_{L^2(\Omega)}\le \eps\|f\|_{L^2(S^2)}$ is exactly a constraint on the sampled values of the difference, hence a constraint of the form $\|A(a-a_0)\|_2\le \eps\|f\|_{L^2(S^2)}$, where $A$ is the sampling matrix with entries $A_{j,(\ell,m)}=Y_\ell^m(\omega_j)$. The objective $\|\widehat h\|_1$ is the $\ell^1$ norm of this coefficient vector, so the optimization problem in Theorem \ref{thm:sphere_121} is the same $\ell^1$ recovery problem as in \cite{A2025,FR13}.

The missing ingredient is the spherical analogue of the restricted isometry estimate for the sampling matrix $A$ when the sampling points are chosen independently and uniformly on $S^2$. This is available in the literature for spherical harmonics up to degree $L$ and implies that, for $q$ as in Theorem \ref{thm:sphere_121}, the matrix $A$ satisfies the restricted isometry condition of the order required in the recovery theorem with high probability, after the same harmless normalizations that appear in the spherical harmonic compressed sensing literature. One convenient reference is Rauhut and Ward \cite{RW12}. With this restricted isometry input in hand, the proof is obtained by repeating the standard restricted isometry to $\ell^1$ recovery argument in this spherical harmonic setting.
\end{proof}

\section*{Discussion and illustrative examples}

The results presented in this paper establish a deterministic link between regularity and compressibility through the Fourier ratio, providing a theoretical foundation for stable recovery from incomplete samples without imposing sparsity assumptions. To illustrate the practical implications of our bounds, we offer some qualitative examples and remarks.

\begin{example}[Scaling behavior of $r_N$]
Consider a fixed $C^2$ function $f$ on $[0,1]^2$ with $\|f\|_{L^2([0,1]^2)}\approx 1$ and $\|f\|_{C^2([0,1]^2)}\approx 1$. The bound in Proposition \ref{prop:discretization} becomes
$$
r_N \approx A_N +16\pi^2 \log N + \frac{8\pi^2}{N},
$$
where $A_N = 2\left|\int f\right|/\|f\|_{L^2}$. For large $N$, the dominant term is $O(\log N)$. Thus the Fourier ratio grows only logarithmically with the grid resolution, implying that the required number of random samples for stable recovery scales like $O(\log^3 N)$ up to additional logarithmic factors already present in the sampling bound.
\end{example}

\begin{example}[Spherical setting with fixed smoothness]
For a bandlimited spherical function $f \in V_L \cap C^2(S^2)$ with $\|f\|_{L^2(S^2)}\approx 1$ and $\|f\|_{C^2(S^2)}\approx 1$, Proposition \ref{prop:sphere_discretization} yields
$$
r_L \approx A_L + C_0 \log L + \frac{C_0}{L}.
$$
Again the logarithmic term dominates for large bandwidth $L$. The sample complexity for stable recovery then scales polylogarithmically in $L$, while the dimension of $V_L$ is $(L+1)^2$. The gap between these quantities illustrates the compressibility induced by smoothness.
\end{example}

\begin{remark}[Practical interpretation of the Fourier ratio]
The Fourier ratio $FR(f) = \|\widehat{f}\|_1/\|\widehat{f}\|_2$ can be viewed as a measure of numerical sparsity. If the Fourier coefficients are concentrated on a small set, the $\ell^1$ norm is relatively small compared to the $\ell^2$ norm. Our results show that smoothness automatically enforces such concentration. This provides a principled explanation for why smooth signals are recoverable from few samples even without explicit sparsity assumptions.
\end{remark}

\begin{remark}[Extensions to other domains]
The framework developed here naturally extends to other compact Riemannian manifolds or homogeneous spaces with a well-behaved spectral decomposition, such as the torus $\mathbb{T}^d$, the rotation group $SO(3)$, or more general compact Lie groups. The key ingredients are: a Parseval identity linking spatial and spectral norms, regularity estimates that control high-frequency coefficients, and random sampling results for the associated orthogonal system. We anticipate that similar Fourier ratio bounds and recovery guarantees can be established in these settings.
\end{remark}

\begin{remark}[Connection to classical approximation theory]
Our bounds on the Fourier ratio can be seen as quantitative versions of the fact that smooth functions have rapidly decaying Fourier coefficients. The logarithmic factor in $r_N$ and $r_L$ reflects the $C^2$ smoothness. Higher regularity, for example $C^k$ with $k>2$, yields improved decay of coefficients and can be expected to lead to correspondingly smaller Fourier ratio bounds.
\end{remark}

\begin{remark}[No sampling]
The interested reader may wonder what happens if no samples are taken and one just has access to portions of the underlying field $f$. The authors of this paper developed the Fourier Ratio in the continuous setting in \cite{ILPY25} for that purpose and obtained exact recovery statements for continuous signals.
\end{remark}

\section*{Conclusion}

We have shown that smoothness of a continuous signal, quantified by membership in $C^2$, deterministically implies compressibility in the Fourier domain as measured by the Fourier ratio. This provides a rigorous foundation for stable recovery from incomplete random samples via $\ell^1$ minimization, without imposing sparsity assumptions. The results hold both for gridded Euclidean data and for bandlimited spherical signals, and the framework is extensible to other geometric settings.

The Fourier ratio serves as a natural bridge between continuous regularity and discrete compressibility, offering a unified perspective on sampling and imputation problems in imaging, remote sensing, and geometric data analysis. Future work may explore tighter bounds for higher-order smoothness, adaptive sampling strategies, and numerical validation of the recovery guarantees.

\end{document}